\newcommand{\mb}{\mathbb}
\newtheorem{theorem}{Theorem}
\newtheorem{corollary}{Corollary}
\newtheorem{definition}{Definition}
\newtheorem{proposition}{Proposition}
\begin{document}

\begin{center}
\textbf{Weyl-Heisenberg frame operators and Kohn-Nirenberg symbols}\\*[4mm]
\textrm{T.C.Easwaran Nambudiri}\\ Department\ of\ Mathematics,\ Government\ Brennen \ College, \\
\ Dharmadam, \ Thalassery,  Kerala \ 670106,\ India. \\
              Tel.: +91 9497384926\\
              easwarantc@gmail.com \\*[2mm]
\textrm{K.Parthasarathy}\\
Ramanujan \ Institute \ for \ Advanced \ Study \ in \ Mathematics\\
University \ of \ Madras, \ Chennai \ 600005, \ India.\\
krishnanp.sarathy@gmail.com \\*[2mm]

 \textit{Key words}:Weyl-Heisenberg frame; frame operator; Gabor atom; Kohn-Nirenberg symbol.

\textit{Mathematics Subject Classification}: 42C15;47G30.
\end{center}

\begin{abstract}
An explicit expression for the Kohn-Nirenberg symbol of a Weyl- Heisenberg frame operator on $L^2(\mathbb{R})$ is obtained directly from the Gabor atom coming from new classes of window functions.
This new approach, using only elementary Fourier analysis, is independent of the theory of distributions and works strictly inside $L^2(\mathbb{R})$. Kohn-Nirenberg operators are introduced and are shown to be Weyl-Heisenberg frame operators in suitable cases.
\end{abstract}

\section{Introduction}
\label{intro}
	Time-frequency analysis exploits  translations and  modulations to analyse functions and operators. Gabor analysis is the outcome of the confluence of time-frequency analysis and the theory of Hilbert space frames (\cite{DS}). Janssen's work (\cite{J}) initiated its mathematical investigations and \cite{DGM} marked its emergence as an important research area.  A central object in the theory, both from the theoretical and applications points of view, is the frame operator. Frame operators of Weyl-Heisenberg frames in $L^2(\mathbb{R})$ have been completely characterised  (\cite{EP}). We seek to get a better insight about these operators by viewing it as an integral operator from $L^2(\mathbb{R})$ into $L^2(\mathbb{R})$ itself, rather than as a map from modulation spaces into the space of tempered distributions.

 A large quantum of work has been carried out by experts using abstract theories in very general settings (see for instance,\cite{BS}, \cite{HGFK}, \cite{W}, Chapters 11 and 14 of \cite{G}). Most of the known results are about such (pseudo-differential) operators, mapping a restrictive class like the Schwartz space into a space, more general (e.g.the space of tempered distributions \cite{HGFK}) than what is actually required, whereas Weyl-Heisenberg frame operators are maps from $L^2(\mathbb{R})$ into itself. As pointed out in \cite{G} (Chapter 14) and \cite{G1}, very little is known about the boundedness of these operators when their Gabor atoms lie outside the modulation space $M^1$ or the Wiener space $W$. The recent survey \cite{G1} on the intrigues of Gabor frames mentions the need for fresh approaches and new classes of window functions to tackle a number of fundamental open problems in the field.

Here our aim is modest:
  \textit{identify some specific function spaces in $L^2(\mathbb{R})$ as suitable classes for Gabor atoms and obtain the Kohn-Nirenberg symbol of the associated frame operator directly from the Gabor atom, in an elementary fashion, without bringing in any abstract theory.}

Although the role of pseudo-differential operators in Gabor analysis (\cite H) and the representation of the Weyl-Heisenberg frame operators using Gabor multipliers (\cite {DT})  have been discussed before, an explicit expression for the Kohn-Nirenberg symbol (\cite{K}) of a Weyl-Heisenberg frame operator in terms of its Gabor atom is not seen in the literature. We provide this through a direct approach, based only on elementary Fourier analysis. New classes $\mathcal E_{a,b}$ and $\mathcal  P_{a,b}$ of window functions in $L^2(\mathbb{R})$ are introduced for this purpose. Our symbol theorem holds for Weyl-Heisenberg frames having Gabor atoms in the larger class $\mathcal P_{a,b}$ and leads to Kohn-Nirenberg operators, which turn out to be Weyl-Heisenberg frame operators under suitable conditions.

 Some needed definitions and facts about abstract frames, frame operators and Weyl-Heisenberg frames are given in section 2. New function spaces and the symbol function are introduced in Section 3. The symbol theorem, Kohn-Nirenberg operators and some applications are presented in the last section.
\section{Preliminaries}
\label{sec:1}
A family $\lbrace u_{k} : k \in \mathbb{N}\rbrace$  in a Hilbert space $\mathcal {H}$ is called a \textit{frame}, if the inequality:
$$\alpha \|x \|^{2} \leq \Sigma_k |\langle x, u_{k}\rangle|^{2} \leq \beta \|x \|^{2}$$
 holds for some positive constants $\alpha$ and $\beta$ and for all $x \in \mathcal {H}$.
 The \textit{frame operator} of a frame is given by $Sx = \Sigma \langle x, u_{k}\rangle u_{k}, \  x \in \mathcal {H}$,
\noindent the series converging unconditionally, and is a bounded linear, positive, invertible operator on $\mathcal{H}$. If only the upper inequality is satisfied, $\{u_k\}$ is called a \textit{Bessel sequence} and the operator $S$ is still defined as a bounded linear operator. We call it the \textit{preframe operator} of $\{u_k\}$. 

Here we only consider  \textit{Weyl-Heisenberg frames} (also known as \textit{Gabor frames}), a special class of frames of the form  $(g,a,b) := \lbrace E_{mb}T_{na}g: m,n \in \mathbb Z\rbrace$ in $L^2(\mathbb R)$, generated by translations $T_{na}$ and modulations $E_{mb}, a,b>0$ of a $g\in L^2(\mathbb R)$ (known as a \textit{Gabor atom} or a \textit{window function}). 

The Fourier transform  $\widehat f$ of an $f\in L^1(\mathbb R)$ is the function defined on $\mathbb R$ by 
$$\widehat f(\xi) = \int_{\mathbb R} \ f(t) \ e^{-2\pi \imath \xi t}\ dt, \ \xi\in \mathbb R.$$ If $f\in L^1 \cap L^2(\mathbb R)$, then $\widehat f \in L^2(\mathbb R)$ with $\|f\|_2 = \|\widehat f\|_2$ and the Fourier transform extends to a unitary operator $\mathcal F$ on $L^2(\mathbb R)$. We call $\mathcal F$ the Fourier transform operator on $L^2(\mathbb R)$ and write, for notational convenience, $\widehat g$ for $\mathcal Fg$ and $\check g$ for $\mathcal F^{-1}g$ even when $g\in L^2(\mathbb R)$.

The background material can be found in \cite{C} and \cite{G}.
\section{Function spaces for generating symbols}
The subspaces of $L^2(\mathbb R)$ introduced here will form the setting for the construction of our explicit expression for the Kohn-Nirenberg symbol for a Weyl-Heisenberg frame operator.

\begin{proposition}
For $g \in L^{2}(\mathbb{R})$, the series $\Sigma_n\ g(x-na) \overline{g}(x+t-na)$ converges absolutely for almost every $x,t \in \mathbb{R}$ and any $a>0$.
\end{proposition}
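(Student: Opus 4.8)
The plan is to reduce everything to a single integral estimate over one period in the $x$-variable. Absolute convergence at $(x,t)$ means exactly that $\tilde F(x,t):=\sum_{n\in\mathbb Z}|g(x-na)|\,|g(x+t-na)|<\infty$, so I work with this nonnegative (measurable) function on $\mathbb R^2$. A reindexing of the sum ($m=n-1$) shows $\tilde F(x+a,t)=\tilde F(x,t)$, i.e.\ $\tilde F$ is $a$-periodic in $x$ for each fixed $t$. Hence it suffices to prove $\tilde F(\cdot,t)<\infty$ a.e.\ on $[0,a]$ for each $t$; the statement for a.e.\ $(x,t)\in\mathbb R^2$ then follows by applying Tonelli's theorem to the indicator of $\{\tilde F=\infty\}$, whose $t$-sections are null.

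Next I would estimate $\int_0^a\tilde F(x,t)\,dx$. By Tonelli this equals $\sum_n\int_0^a|g(x-na)|\,|g(x+t-na)|\,dx$. Apply Cauchy--Schwarz to each $x$-integral to bound the $n$-th term by $\sqrt{c_n}\,\sqrt{d_n(t)}$, where $c_n=\int_0^a|g(x-na)|^2\,dx$ and $d_n(t)=\int_0^a|g(x+t-na)|^2\,dx$. Then apply Cauchy--Schwarz a second time, now to the sum over $n\in\mathbb Z$, to get $\sum_n\sqrt{c_n d_n(t)}\le\bigl(\sum_n c_n\bigr)^{1/2}\bigl(\sum_n d_n(t)\bigr)^{1/2}$.

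The decisive point is that the intervals $\{na+[0,a]\}_{n\in\mathbb Z}$ tile $\mathbb R$, so after the substitutions $u=x-na$ and $u=x+t-na$ one gets $\sum_n c_n=\|g\|_2^2$ and $\sum_n d_n(t)=\|g\|_2^2$ for \emph{every} $t$. Therefore $\int_0^a\tilde F(x,t)\,dx\le\|g\|_2^2<\infty$ for each $t$, which forces $\tilde F(\cdot,t)<\infty$ a.e.\ on $[0,a]$ and, by periodicity, a.e.\ on $\mathbb R$. Together with the Tonelli argument of the first paragraph this gives absolute convergence for a.e.\ $(x,t)$, with no restriction on $a>0$.

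The only real subtlety — the reason a naive argument fails — is that $g$ need not lie in $L^1(\mathbb R)$, so one cannot bound $\int|g(x+t-na)|\,dt$ by $\|g\|_1$; the fix is precisely the double use of Cauchy--Schwarz above, which expresses the bound entirely through $\|g\|_2$. I expect that to be essentially the whole content; the rest is routine Tonelli/Fubini bookkeeping and the elementary tiling identity.
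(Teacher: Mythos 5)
Your proof is correct and rests on exactly the two ingredients the paper's (very terse) proof cites --- the periodization identity $\|g\|_2^2=\int_0^a\sum_n|g(x-na)|^2\,dx$ and the Cauchy--Schwarz inequality --- so it is essentially the same argument. The only structural difference is that you apply Cauchy--Schwarz in integrated form over one period (twice) and finish with Tonelli, whereas the paper's intended route is pointwise: the identity gives $\sum_n|g(x-na)|^2<\infty$ for a.e.\ $x$ and $\sum_n|g(x+t-na)|^2<\infty$ for a.e.\ $x+t$, and a single application of Schwarz to the series then yields absolute convergence for a.e.\ $(x,t)$.
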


\begin{proof}
Use Schwarz inequality and the fact
$\|g\|_2^{2} = \underset {0}{\overset{a}\int} (\Sigma_n  |g(x-na)|^{2})\ dx$.
\end{proof}

\begin{definition}
\rm{For $a>0$ and $g \in L^{2}(\mathbb{R})$, the associated function $\Phi$ is defined by $$\Phi(x,t):= \Sigma_n \ g(x-na) \overline{g}(x+t-na),\ a.e. \ x,t\in \mathbb {R}$$}.
\end{definition}

\begin{proposition}
Suppose $g\in L^{2}(\mathbb{R})$ satisfies $\Sigma_n |g(x-na)| < \infty,  a.e. x\in \mathbb R$. Then 

i) $\Phi_{x} \in L^{2}(\mathbb{R})$ for a.e. $x \in \mathbb{R}$ and

ii) $\check \Phi_{x} \in L^{2}\cap L^{1}(\mathbb{R})$ for a.e. $x \in \mathbb{R}$ when $ \widehat{g} \in L^{1}(\mathbb{R})$, where $\Phi_{x}(t) :=  \Phi(x,t)$.
\end{proposition}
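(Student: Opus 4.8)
The plan is to treat (i) by a direct triangle‑inequality estimate and (ii) by exhibiting $\check\Phi_x$ in closed form. For (i), fix an $x$ (from the full‑measure set furnished by the hypothesis) with $\Sigma_n |g(x-na)|<\infty$. For each $n$ the function $t\mapsto \overline g(x+t-na)$ is a translate of $\overline g$, hence lies in $L^2(\mathbb R)$ with $L^2$‑norm equal to $\|g\|_2$. So the partial sums $\Phi_x^{(N)}(t):=\Sigma_{|n|\le N} g(x-na)\,\overline g(x+t-na)$ satisfy, by Minkowski's inequality applied to the finite sum, $\|\Phi_x^{(N)}-\Phi_x^{(M)}\|_2\le \|g\|_2\,\Sigma_{M<|n|\le N}|g(x-na)|$, so $(\Phi_x^{(N)})_N$ is Cauchy in $L^2(\mathbb R)$; its $L^2$‑limit agrees almost everywhere with the pointwise limit $\Phi_x$ coming from the absolute a.e.\ convergence proved above. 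Hence $\Phi_x\in L^2(\mathbb R)$, with the bound $\|\Phi_x\|_2\le \|g\|_2\,\Sigma_n |g(x-na)|$ obtained by letting $N\to\infty$.

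For (ii), the $L^2$ assertion is immediate from (i) since $\mathcal F^{-1}$ is unitary. For the $L^1$ part I would compute $\check\Phi_x$ explicitly. Using the standard $L^2$ identity $\mathcal F^{-1}\big(\overline g(\cdot+u)\big)(\xi)=e^{-2\pi\imath u\xi}\,\overline{\widehat g(\xi)}$ (translation$\leftrightarrow$modulation, combined with conjugation under $\mathcal F^{-1}$), one gets, for the \emph{finite} sums where no interchange is needed, $\mathcal F^{-1}\Phi_x^{(N)}(\xi)=\overline{\widehat g(\xi)}\,G_x^{(N)}(\xi)$, where $G_x^{(N)}(\xi):=\Sigma_{|n|\le N} g(x-na)\,e^{-2\pi\imath(x-na)\xi}$. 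Since $|G_x^{(N)}(\xi)|\le \Sigma_n |g(x-na)|=:A(x)<\infty$, the sequence $G_x^{(N)}$ converges pointwise (for a.e.\ $x$) to a bounded function $G_x$ with $\|G_x\|_\infty\le A(x)$, and moreover $|\mathcal F^{-1}\Phi_x^{(N)}(\xi)|\le A(x)\,|\widehat g(\xi)|$, which is in $L^1(\mathbb R)$ by the hypothesis $\widehat g\in L^1(\mathbb R)$.

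Dominated convergence then gives $\mathcal F^{-1}\Phi_x^{(N)}\to \overline{\widehat g}\,G_x$ in $L^1(\mathbb R)$, so this limit lies in $L^1(\mathbb R)$ with $\|\overline{\widehat g}\,G_x\|_1\le A(x)\,\|\widehat g\|_1$; and since $\mathcal F^{-1}\Phi_x^{(N)}\to \check\Phi_x$ in $L^2(\mathbb R)$ by (i) and continuity of $\mathcal F^{-1}$, passing to an a.e.\ convergent subsequence identifies $\check\Phi_x=\overline{\widehat g}\,G_x$, which therefore belongs to $L^1\cap L^2(\mathbb R)$. The only point that needs real care is this passage from the finite partial sums to the full series: since $g$ is merely in $L^2$, the series defining $\Phi_x$ (and its transform) need not converge in $L^1$ on its own, so the exchange of $\mathcal F^{-1}$ with the summation has to be routed through the two modes of convergence above — $L^2$ to pin down the limit, and $L^1$ (only after multiplying by the $L^1$ factor $\overline{\widehat g}$) to land in $L^1(\mathbb R)$. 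Everything else is routine bookkeeping with Minkowski's inequality and translation–modulation formulas.
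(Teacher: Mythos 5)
Your proof is correct and follows essentially the same route as the paper: partial sums of translates Cauchy in $L^2(\mathbb R)$ for part (i), and for part (ii) the explicit computation $\check\Phi_x(\xi)=\check{\overline g}(\xi)\,\Sigma_n g(x-na)e^{-2\pi\imath(x-na)\xi}$ together with the bound by $A(x)\,|\widehat g(\xi)|\in L^1(\mathbb R)$ (the paper phrases this as ``similar arguments for $\check S_k$,'' i.e.\ Cauchy in $L^1$ rather than dominated convergence, but it is the same estimate). The closed form you derive for $\check\Phi_x$ is exactly the identity the paper establishes later in Proposition~\ref{FSP}, so your extra care about interchanging $\mathcal F^{-1}$ with the summation is well placed but not a departure from the paper's argument.
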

\begin{proof}
Fix an $x\in \mathbb{R}$ such that $\Sigma_n |g(x-na)| < \infty $. Then the partial sums $S_{k}$ defined by
$S_{k} = \Sigma_{n=-k}^k \ g(x-na)$ $\overline{T_{na-x}g}$,
form a Cauchy sequence in $L^{2}(\mathbb{R})$,  and the limit is just $\Phi_{x}$. Hence $\Phi_{x} \in L^{2}(\mathbb{R})$ and $\check \Phi_{x} \in L^{2}(\mathbb{R})$ for a.e. $x \in \mathbb{R}$. 

For ii), use similar arguments for $\check{S_{k}}$ show that $\check \Phi_{x} \in L^{1}(\mathbb{R})$ for a.e. $x \in \mathbb{R}.$
\end{proof}

Motivated by this, we now introduce our function spaces $\mathcal P_{a,b}$ and $\mathcal E_{a,b}$.

\begin{definition}
\rm{For $a,b>0$ let $\mathcal P_{a,b}$ be the space
of those $g\in L^1(\mathbb R)$ satisfying

i) $\Sigma_m |\check \Phi_{x} (\xi - mb)| \leq B_x$ for some $B_x$, for a.e. $\xi, x \in \mathbb{R}$; \ \  ii) $\widehat g\in L^1(\mathbb R)$.

The space $\mathcal E_{a,b}$ is the class of functions $g\in L^1(\mathbb R)$ for which there are positive constants $A,B$
such that $ \Sigma_n |g(x - na)| \leq A$ and $
\Sigma_m | \widehat{g}(\xi - mb)| \leq B$ for a.e. $x,\xi \in \mathbb R$.

 For $g \in \mathcal P_{a,b}$, define $\Psi$ by
$\Psi(x, \xi):= \Sigma_m \ \check \Phi_{x} (\xi - mb)$, a.e.  $x, \xi \in \mathbb{R}.$}
\end{definition}
\begin{proposition}\label{PE}
For all $a,b>0$, $\mathcal {E}_{a,b}$ is a subspace of $\mathcal P_{a,b}$ that is invariant under both translations and modulations.
\end{proposition}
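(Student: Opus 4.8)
The plan is to verify three things in succession: that $\mathcal E_{a,b}$ is a linear subspace of $L^2(\mathbb R)$, that $\mathcal E_{a,b}\subseteq\mathcal P_{a,b}$, and that each translation $T_c$ and each modulation $E_c$ maps $\mathcal E_{a,b}$ into itself. The subspace property is immediate from the triangle inequality: for $g,h\in\mathcal E_{a,b}$ with constants $A_g,B_g$ and $A_h,B_h$ one has $g+h\in L^1(\mathbb R)$, $\Sigma_n|(g+h)(x-na)|\le A_g+A_h$ and $\Sigma_m|\widehat{g+h}(\xi-mb)|\le B_g+B_h$ for a.e.\ $x,\xi$, and scalar multiples are handled the same way; the space is nonzero, since it contains $C_c^\infty(\mathbb R)$ (for such $g$ the first sum is finite and $\widehat g$ is a Schwartz function).

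For the inclusion $\mathcal E_{a,b}\subseteq\mathcal P_{a,b}$ the only step with any real content is an explicit formula for $\check\Phi_x$. Fix $x$ with $\Sigma_n|g(x-na)|\le A$, which holds for a.e.\ $x$. Writing $\Phi_x=\Sigma_n g(x-na)\,\overline{T_{na-x}g}$, this series converges in $L^1(\mathbb R)$ because $\Sigma_n|g(x-na)|\,\|g\|_1\le A\|g\|_1<\infty$; hence, $\mathcal F^{-1}$ being continuous from $L^1(\mathbb R)$ into $C_0(\mathbb R)$, we may apply it term by term. Using $\mathcal F^{-1}\bigl(\overline{T_{na-x}g}\bigr)(\xi)=e^{2\pi\imath(na-x)\xi}\,\overline{\widehat g(\xi)}$ we obtain
$$\check\Phi_x(\xi)=e^{-2\pi\imath x\xi}\,\overline{\widehat g(\xi)}\,\Sigma_n g(x-na)\,e^{2\pi\imath na\xi}\qquad\text{for a.e. }\xi ,$$
that is, $\check\Phi_x$ is $\overline{\widehat g}$ times a bounded $a^{-1}$-periodic factor, so $|\check\Phi_x(\xi)|\le A\,|\widehat g(\xi)|$. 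Periodizing over $b\mathbb Z$,
$$\Sigma_m|\check\Phi_x(\xi-mb)|\le A\,\Sigma_m|\widehat g(\xi-mb)|\le AB\qquad\text{for a.e. }\xi,x ,$$
which is condition (i) of $\mathcal P_{a,b}$ with $B_x\equiv AB$; condition (ii), $\widehat g\in L^1(\mathbb R)$, is already built into $\mathcal E_{a,b}$ (and in any case $\int_{\mathbb R}|\widehat g|=\int_0^b\Sigma_m|\widehat g(\xi-mb)|\,d\xi\le bB$). Thus $g\in\mathcal P_{a,b}$.

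For invariance fix $c\in\mathbb R$. Since $|(T_cg)(x)|=|g(x-c)|$, the bound $\Sigma_n|(T_cg)(x-na)|=\Sigma_n|g(x-c-na)|\le A$ holds for a.e.\ $x$ (the null exceptional set is merely translated), while $\widehat{T_cg}=E_{-c}\widehat g$ has the same modulus as $\widehat g$, so $\Sigma_m|\widehat{T_cg}(\xi-mb)|\le B$; likewise $|(E_cg)(x)|=|g(x)|$ and $\widehat{E_cg}=T_c\widehat g$, so both lattice sums for $E_cg$ are bounded by $A$ and $B$ respectively. As $T_cg,E_cg\in L^1(\mathbb R)$ as well, both remain in $\mathcal E_{a,b}$.

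There is no serious obstacle here: the only genuine content is the explicit computation of $\check\Phi_x$, and once it is recognized as $\overline{\widehat g}$ multiplied by the bounded $a^{-1}$-periodic function $\Sigma_n g(x-na)e^{2\pi\imath na\xi}$, the periodization estimate and the translation/modulation bookkeeping are entirely routine. The points to watch are purely measure-theoretic: the displayed identities hold only for a.e.\ $\xi$ (and a.e.\ $x$), and one should check that the relevant null sets transform harmlessly under the translations and modulations appearing in the last paragraph.
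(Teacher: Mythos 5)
Your proposal is correct and follows essentially the same route as the paper: the heart of both arguments is the identity $\check\Phi_x(\xi)=\overline{\widehat g(\xi)}\,\Sigma_n g(x-na)e^{-2\pi\imath(x-na)\xi}$ (the paper justifies the term-by-term inversion by dominated convergence, you by $L^1$-continuity of $\mathcal F^{-1}$), followed by the periodization estimate $\Sigma_m|\check\Phi_x(\xi-mb)|\le AB$ and the observation that $\widehat{T_cg}=E_{-c}\widehat g$, $\widehat{E_cg}=T_c\widehat g$ give the invariance. You merely spell out the linearity and the almost-everywhere bookkeeping that the paper leaves implicit.
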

\begin{proof}
Since $\widehat{T_cf} = E_{-c}\widehat{f}$ and $\widehat{E_cf} = T_{c}\widehat{f}$ for $c\in \mathbb{R}$, we need only to establish the inclusion: $\mathcal {E}_{a,b}\subset \mathcal {P}_{a,b}$. For $g\in \mathcal {E}_{a,b}$, $\Sigma_n|g(x-na)| \int_{\mathbb{R}} |\overline{g}(x+t-na)|\ dt$ is finite for a.e. $x$ and an application of the dominated convergence theorem yields

$\check \Phi_{x}(\xi) \ = \Sigma_n\ g(x-na)\ e^{-2\pi i\xi (x-na)} \int_{\mathbb{R}}\overline{g}(u) \ e^{2\pi i\xi u}du$
\\\hspace*{1.5cm}
$= \check{\overline{g}}(\xi) \Sigma_n\ g(x-na)\ e^{-2\pi i\xi (x-na)}.$

\noindent This leads to an estimate stronger than asserted, independent of $x$:

$\Sigma_m\ | \check \Phi_{x} (\xi - mb)|
\leq \Sigma_m \ | \check{\overline{g}}(\xi - mb)| \Sigma_n\ |g(x-na)|\leq AB$.
\end{proof}

The Wiener space $W\subset L^1 \cap L^2(\mathbb{R})$ is the space of measurable functions $g$  with $\|g\|_W := \Sigma_k \|g\chi_{[k, k+1)}\|_{\infty} < \infty.$

According to experts, $W \cap \widehat W$ is a natural and practically important space for sampling (See \cite{G} for details on $W$ and its relation to sampling.) Thus, the following inclusions are of interest.

\begin{proposition}
For all $a, b>0$, the space $\mathcal {E}_{a,b}$ contains $ W\cap \widehat{W}$ and hence the Schwartz space $\mathcal S$ as well as the Feichtinger algebra $\mathcal S_0$.
\end{proposition}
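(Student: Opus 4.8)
The plan is to reduce the whole statement to the single inclusion $W\cap\widehat W\subset\mathcal E_{a,b}$. The remaining claims are then classical: $\mathcal S\subset W\cap\widehat W$ because a Schwartz function decays rapidly (so $\Sigma_k\|g\chi_{[k,k+1)}\|_\infty<\infty$) and $\mathcal S$ is Fourier invariant; and $\mathcal S_0\subset W\cap\widehat W$ is the standard continuous embedding $\mathcal S_0\hookrightarrow W(C_0,\ell^1)\subset W$ together with the Fourier invariance of $\mathcal S_0$ (see \cite{G}). So I would fix $g\in W\cap\widehat W$; since $W\subset L^1(\mathbb R)$, both $g$ and $\widehat g$ are in $L^1(\mathbb R)$, so the only thing to verify is that $\Sigma_n|g(x-na)|$ and $\Sigma_m|\widehat g(\xi-mb)|$ are essentially bounded.

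For the translation side, for $c>0$ and $h\in W$ write $\|h\|_{W,c}:=\Sigma_k\|h\chi_{[kc,(k+1)c)}\|_\infty$. A routine covering argument shows $\|h\|_{W,c}\le C_c\|h\|_W<\infty$: each interval $[kc,(k+1)c)$ meets at most $\lceil c\rceil+1$ of the unit intervals $[j,j+1)$, and each unit interval meets at most $\lceil 1/c\rceil+1$ of the intervals $[kc,(k+1)c)$, and these counts are uniform in $k$. Next, the function $x\mapsto\Sigma_n|g(x-na)|$ is periodic with period $a$, so it suffices to bound it for $x\in[0,a)$; for such $x$ one has $x-na\in[-na,-(n-1)a)$, hence $|g(x-na)|\le\|g\chi_{[-na,-(n-1)a)}\|_\infty$ for a.e. $x$, and summing over $n\in\mathbb Z$ gives $\Sigma_n|g(x-na)|\le\|g\|_{W,a}\le C_a\|g\|_W=:A$ for a.e. $x$ (the countably many exceptional null sets, one per $n$, combine into a single null set).

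The Fourier side is identical: applying the same argument to $\widehat g\in W$ with the spacing $b$ in place of $a$ yields $\Sigma_m|\widehat g(\xi-mb)|\le C_b\|\widehat g\|_W=:B$ for a.e. $\xi$, and this completes the check that $g\in\mathcal E_{a,b}$. The only point that needs care is the comparison of the amalgam-type norms $\|\cdot\|_{W,c}$ for different spacings $c$ with the reference norm $\|\cdot\|_W=\|\cdot\|_{W,1}$; this is where the constants $A$ and $B$ pick up their dependence on $a$ and $b$, and one has to make sure the covering multiplicities are bounded independently of the index. Everything else is bookkeeping with essential suprema over translates.
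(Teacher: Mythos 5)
Your proof is correct and follows essentially the same route as the paper: reduce everything to the inclusion $W\cap\widehat W\subset\mathcal E_{a,b}$, obtain the a.e.\ bounds $\Sigma_n|g(x-na)|\le C_a\|g\|_W$ and $\Sigma_m|\widehat g(\xi-mb)|\le C_b\|\widehat g\|_W$ from the Wiener-amalgam structure (using that $g\in\widehat W$ forces $\widehat g\in W$), and then invoke the standard embeddings of $\mathcal S$ and $\mathcal S_0$ into $W\cap\widehat W$. The only real difference is that you prove the spacing-change estimate $\|h\|_{W,c}\le C_c\|h\|_W$ by an explicit covering/double-counting argument, whereas the paper simply cites it from \cite{C} and \cite{G}.
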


\begin{proof}
For $g \in W\cap \widehat{W}$, there is a constant $C_a$ for any $a>0$ such that
 \\\hspace*{2cm}$\Sigma_n |g(x-na)| \leq C_a \|g\|_W$ 
\\ for a.e. $x\in \mathbb R$  ( \cite{C} p.221, \cite{G} p.105). 
The first assertion follows since if $g = \widehat h, h\in W$, then $\widehat g(\xi) = h(-\xi)$ so that $\widehat{g} \in W$ and so, for any $b>0$ and a.e. $\xi \in \mathbb R$,
 \\\hspace*{2cm}$\Sigma_m |\widehat g(\xi - mb)| \leq C_b \|\widehat{g}\|_W $.

Next we observe that $g\in W\cap \widehat{W}$ if both $g(t)$ and $\widehat g(t)$ are $ O(1/(1+|t|)^2)$.
\\
For, if $C > 0$ is such that $ |g(t)| \leq C/(1+|t|)^2$
for all $t \in \mathbb{R}$, then
\\\hspace*{2cm}
$\gamma_n:= \max \{|g(t)|: n\leq t\leq n+1\} \leq C/(1+|n|)^2, n\geq 0$
\\\noindent and similarly $\gamma_n \leq C/(1+|n-1|)^2, n\leq 0$. Thus $\Sigma_n \gamma_n <\infty$ and $g\in W$.
 
 The same considerations for $\widehat g$ in place of $g$ gives $\widehat g\in W$ and so $g\in \widehat{W}$. 
 
 In particular, $\mathcal S$ lies in $ W\cap \widehat{W}$.
Finally, the Feichtinger algebra $\mathcal S_0$ (or the modulation space $M^1$) also lies in $W \cap \widehat{W}$, by Proposition 12.1.4 of \cite{G}.
\end{proof}
Among the many interesting properties of  spaces $\mathcal E_{a,b}$ and $\mathcal P_{a,b}$, we content ourselves with presenting only those that are relevant to the symbol function.

\begin{proposition} \label{FSP}
For $g \in \mathcal P_{a,b}$, $\Psi$ has an absolutely convergent expansion
\\\hspace*{2cm}
 $\Psi(x, \xi) = \Sigma_m
 \Sigma_n \ g(x-na)\
  \check{\overline{g}}(\xi - mb)  e^{-2\pi i(x-na)(\xi - mb)}$
\\\indent Further, for almost every $x,$ the function $\Psi_x$ is integrable
on $[0, b)$ and has an absolutely convergent Fourier series expansion
\\\hspace*{3cm}  $ \Psi_{x}(\xi) = \Sigma_k \ c_{k}e^{2\pi i\xi (k/b)}$
 \\where $c_{k} = c_k(x) = (1/b)\Sigma_n \ g(x-na) \overline{g}(x-na+k/b).$
\end{proposition}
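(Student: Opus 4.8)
The plan is to work from the definition $\Psi(x,\xi)=\Sigma_m\,\check\Phi_x(\xi-mb)$ and unwind it in two stages: first express $\check\Phi_x$ as a sum over $n$, then recognize the resulting double sum over $m$ as a Fourier series in $\xi$ of period $b$.

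First I would compute $\check\Phi_x$. By definition $\Phi_x(t)=\Sigma_n g(x-na)\,\overline g(x+t-na)$, and Proposition~\ref{PE}'s argument (valid here since $g\in\mathcal P_{a,b}\subset L^1$ with $\widehat g\in L^1$, so the second Proposition applies and $\check\Phi_x\in L^1\cap L^2$) already records the key identity
$$\check\Phi_x(\xi)=\check{\overline g}(\xi)\,\Sigma_n g(x-na)\,e^{-2\pi i\xi(x-na)}$$
for a.e.\ $x$; I would re-derive it cleanly by writing $\Phi_x=\Sigma_n g(x-na)\,\overline{T_{na-x}g}$, taking inverse Fourier transforms termwise (justified by $L^1$-convergence of the partial sums $S_k$, exactly as in Proposition~3's proof), and using $\mathcal F^{-1}(T_c h)(\xi)=e^{2\pi i c\xi}\check h(\xi)$ together with $\check{\overline g}(\xi)=\overline{\widehat g(\xi)}$... more carefully, $\mathcal F^{-1}\overline{T_{na-x}g}=\overline{\mathcal F\,T_{na-x}g}=\overline{e^{-2\pi i(na-x)(\cdot)}\widehat g}$, which gives the stated form. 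Summing over $m$ with the shift $\xi\mapsto\xi-mb$ and invoking the defining hypothesis (i) of $\mathcal P_{a,b}$, $\Sigma_m|\check\Phi_x(\xi-mb)|\le B_x<\infty$, lets me interchange the two sums freely and land on
$$\Psi(x,\xi)=\Sigma_m\Sigma_n\,g(x-na)\,\check{\overline g}(\xi-mb)\,e^{-2\pi i(x-na)(\xi-mb)},$$
with absolute convergence. (One should check the $m$-sum of the product is dominated, but since $|e^{-2\pi i(x-na)(\xi-mb)}|=1$ this is immediate from hypothesis (i) applied after the $n$-sum, or more directly: the double series is bounded by $\Sigma_m|\check\Phi_x(\xi-mb)|$ regrouped.)

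For the Fourier-series part, I would fix $x$ (a.e.) and regard $\Psi_x(\xi)=\Sigma_m\check\Phi_x(\xi-mb)$ as the $b$-periodization of the $L^1$ function $\check\Phi_x$. Integrability of $\Psi_x$ on $[0,b)$ follows from $\int_0^b\Sigma_m|\check\Phi_x(\xi-mb)|\,d\xi=\|\check\Phi_x\|_1<\infty$. Its Fourier coefficients (with respect to the exponentials $e^{2\pi i\xi(k/b)}$, so $c_k=(1/b)\int_0^b\Psi_x(\xi)e^{-2\pi i\xi k/b}\,d\xi$) are, by unfolding the periodization, $c_k=(1/b)\int_{\mathbb R}\check\Phi_x(\xi)\,e^{-2\pi i\xi k/b}\,d\xi=(1/b)\,\mathcal F(\check\Phi_x)(k/b)=(1/b)\,\Phi_x(-k/b)$ up to the reflection convention; since $\Phi_x(t)=\Sigma_n g(x-na)\overline g(x+t-na)$, evaluating at the appropriate point gives $c_k=(1/b)\Sigma_n g(x-na)\,\overline g(x-na+k/b)$ as claimed. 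Absolute convergence of $\Sigma_k c_k e^{2\pi i\xi k/b}$ back to $\Psi_x(\xi)$ is then precisely hypothesis (i) again, read the other way: feeding the double-sum expression above through, the coefficient structure shows $\Sigma_k|c_k|$ is controlled by $\Sigma_m|\check\Phi_x(\xi-mb)|$ evaluated suitably — alternatively, one applies the standard fact that a function whose periodization has summable ``sampled Fourier transform'' has an absolutely convergent Fourier series, which is exactly what (i) encodes.

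The main obstacle I anticipate is bookkeeping with the Fourier conventions and reflections ($\widehat{\ }$ vs.\ $\check{\ }$, the sign in $e^{\pm 2\pi i}$, and whether $\mathcal F\check\Phi_x$ evaluated at $k/b$ produces $\Phi_x(k/b)$ or $\Phi_x(-k/b)$) so that the final formula for $c_k$ comes out with the $+k/b$ shift rather than $-k/b$; one must track the reflection carefully and possibly reindex $n\mapsto -n$ or exploit that the expression for $c_k(x)$ is, after a change of summation variable, symmetric enough to match. A secondary (but routine) point is justifying each interchange of summation/integration purely from the $L^1$ bound $B_x$ and $\|\check\Phi_x\|_1<\infty$ without appealing to any distributional machinery — which is the whole point of the paper, and is handled by dominated convergence on the partial sums exactly as in the earlier propositions.
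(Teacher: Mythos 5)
Your overall route is the same as the paper's (termwise inverse transform of $\Phi_x$ to get $\check\Phi_x(\xi)=\check{\overline g}(\xi)\,\Sigma_n g(x-na)e^{-2\pi i\xi(x-na)}$, then periodize over $mb$, then unfold the periodization to compute $c_k$), and those parts are fine. But there is a genuine gap where you justify the \emph{absolute convergence of the Fourier series}: you claim $\Sigma_k|c_k|<\infty$ is ``precisely hypothesis (i) again, read the other way.'' It is not. Hypothesis (i) controls $\Sigma_m|\check\Phi_x(\xi-mb)|$, the periodization of $\check\Phi_x$ on the frequency side; what you need is $\Sigma_k|c_k|=(1/b)\Sigma_k|\Phi_x(k/b)|<\infty$, i.e.\ summability of the \emph{samples of the transform} $\Phi_x=\mathcal F\check\Phi_x$ on the dual lattice $(1/b)\mathbb Z$. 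These are Poisson-dual conditions and neither implies the other for a general $L^1\cap L^2$ function; a bounded, integrable periodization tells you nothing about whether the Fourier coefficients of that periodization are summable. The paper does not attempt to extract this from (i): it estimates $\Sigma_k|c_k|$ directly from the double series, $\Sigma_k|c_k|\le (1/b)\Sigma_n|g(x-na)|\bigl(\Sigma_k|g(x-na+k/b)|\bigr)$, and argues finiteness from $g\in L^1$ (a.e.\ finiteness of the periodizations of $|g|$ at steps $a$ and $1/b$). You need to supply an argument of this kind; your ``standard fact'' as stated is circular, since the ``sampled Fourier transform'' of $\check\Phi_x$ \emph{is} the coefficient sequence whose summability is in question.

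A smaller instance of the same confusion occurs earlier: you say the absolute convergence of the double series for $\Psi$ is ``immediate from hypothesis (i)'' or that ``the double series is bounded by $\Sigma_m|\check\Phi_x(\xi-mb)|$ regrouped.'' The sum of absolute values of the individual terms is $\bigl(\Sigma_n|g(x-na)|\bigr)\bigl(\Sigma_m|\check{\overline g}(\xi-mb)|\bigr)$, which dominates, not is dominated by, $\Sigma_m|\check\Phi_x(\xi-mb)|$. The correct (and easy) justification, which is what the paper means by ``the assumption on $g$,'' is that $g\in L^1$ forces $\Sigma_n|g(x-na)|<\infty$ a.e.\ and $\widehat g\in L^1$ forces $\Sigma_m|\check{\overline g}(\xi-mb)|<\infty$ a.e. Your handling of the reflection in $c_k=(1/b)\mathcal F(\check\Phi_x)(k/b)$ is only a bookkeeping worry (there is in fact no reflection, since $\mathcal F\mathcal F^{-1}=I$), and the rest of the interchanges are justified as you describe.
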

\begin{proof}
First note that $\check \Phi_{x}\in L^{2}\cap L^1(\mathbb{R})$ by the previous proposition. 
\\
Moreover $\Sigma |g(x-na)| \int_{\mathbb{R}} |\overline{g}(x+t-na)|\ dt$ is finite for a.e. $x$ and an application of the dominated convergence theorem yields expansion for $\check \Phi_{x}$, hence for $\Psi$:

$\check \Phi_{x}(\xi)  = \int_{\mathbb{R}}\Sigma \ g(x-na)\overline{g}(x+t-na) \ e^{2\pi i\xi t}dt$
\\\hspace*{1.4cm}
$= \Sigma \ g(x-na) \int_{\mathbb{R}}\overline{g}(x+t-na) \ e^{2\pi i\xi t}dt$
\\\hspace*{1.4cm}
$= \Sigma \ g(x-na)\ e^{-2\pi i\xi (x-na)} \int_{\mathbb{R}}\overline{g}(u) \ e^{2\pi i\xi u}du$
\\\hspace*{1.4cm}
$= \check{\overline{g}}(\xi) \Sigma \ g(x-na)\ e^{-2\pi i\xi (x-na)}$,

$\Psi(x, \xi) = \Sigma_m  \check \Phi_{x} (\xi - mb)$
$= \Sigma_m \Sigma_n g(x-na) \ \check{\overline{g}}(\xi - mb)  e^{-2\pi i(x-na)(\xi - mb)},$
\\ the absolute convergence of the series being a consequence of the assumption on $g$. Writing $A_x = \Sigma \ |g(x-na)|$, from this we get the estimates:

$\Sigma\ | \check \Phi_{x} (\xi - mb)|
 \leq A_x \  \Sigma \ | \check{\overline{g}}(\xi - mb)|$
 as well as

 $\int_{[0, b)} |\Psi_x(\xi)| \ d\xi \leq A_x \int_{[0, b)}
 \Sigma \ | \check{\overline{g}}(\xi - mb)| \ d\xi $
$= A_x \int_{\mathbb R} | \check{\overline{g}}(\xi)| \ d\xi$
$ <  \infty,$
\\the last integral being finite because
$\check{\overline{g}}\in L^1(\mathbb R)$.
The Fourier coefficients $c_{k}$ of $\Psi_{x}$ can now be evaluated without difficulty:
\\\hspace*{6mm} 
$c_{k}  = (1/b) \int_{[0, b)} \Psi_{x}(\xi) e^{-2\pi i\xi (k/b)}d\xi$
\\\hspace*{1cm} $=(1/b) \int_{[0,b)} \Sigma_m \ \check{\overline{g}}
(\xi - mb) \Sigma_n g(x-na) e^{-2\pi i(\xi-mb)(x-na)} e^{-2\pi i\xi(k/b)} d\xi$
\\\hspace*{1cm} $= (1/b)\Sigma_n \ g(x-na) \int_{[0,b)}
\Sigma_m\ \check{\overline{g}}(\xi - mb)e^{-2\pi i[(\xi-mb)(x-na)+\xi(k/b)]}d\xi $
\\\hspace*{1cm} $=(1/b)\Sigma_n\ g(x-na)
\int_{\mathbb{R}}\check{\overline{g}}(v)e^{-2\pi iv(x-na+k/b)}dv $
\\\hspace*{1cm} $=(1/b)\Sigma_n\ g(x-na)\overline{g}(x-na+k/b)$
\\ since the integral in the penultimate step exists and is
$\mathcal F \mathcal F^{-1}{\overline{g}}(x-na+k/b) = \overline{g}(x-na+k/b).$
The absolute convergence of the double series for $\Psi$ justifies the interchange of summations over $m$ and $n$. Analogous reasoning, using Fubini, validate taking the integral inside the summation over $n$.

 To see absolute convergence, observe that

$ \Sigma_k\ |c_{k}| \leq  (1/b)\Sigma_k \ \Sigma_n\ |g(x-na)||\overline{g}((x-na)+(k/b))|$
\\\hspace*{1.8cm} $ = (1/b) \Sigma_n \ |g(x-na)|( \Sigma_k\ |\overline{g}((x-na)+(k/b))|)$
 $ < \infty, $
\\the last two sums being finite because $g\in L^1(\mathbb R)$.
\end{proof}
In the literature, $\Sigma_n \ g(x-na)\overline{g}(x-na+k/b)$ is usually
denoted by $G_k(x)$ for $k\in \mathbb Z$. 
In our notation,
$G_k(x) = \Phi_x(k/b) = b\ c_k(x)$. Thus
$ \Psi_{x}(\xi) = (1/b) \Sigma_k G_k(x)\ e^{2\pi i\xi (k/b)}$.
\section{Kohn-Nirenberg symbols and operators}
 Now we express a Weyl-Heisenberg frame operator in terms of the Kohn-Nirenberg symbol. This leads to Kohn-Nirenberg operators. We make use of the dense subspace $A_1(\mathbb R) := \lbrace f\in L^1(\mathbb{R}): \widehat f \in L^1(\mathbb R)\rbrace$ of $L^2(\mathbb R)$.
We adopt the following definition from \cite{G} for our symbol theorem. 

A  \textit{pseudo-differential operator with Kohn-Nirenberg symbol $\sigma$} is an operator of the form
$ K_{\sigma}f(x) := \int_{\mathbb{R}} \sigma (x, \xi) \widehat{f}(\xi) e^{2 \pi \imath x \xi} d \xi$.
\begin{theorem}
Let $g \in \mathcal P_{a,b}$ and suppose that $(g, a, b )$ is a Bessel sequence. Then its preframe operator $S$ is given by
\\\hspace*{2cm}$Sf(x) = \int_{\mathbb{R}} \Psi (x, \xi) \widehat{f}(\xi) e^{2 \pi \imath x \xi} d \xi = \mathcal {F}^{-1}M _{\Psi_{x}}\mathcal {F}f(x)$  \\for almost every $x \in \mathbb{R}$ and for all $f$ in the dense subspace $A_1(\mathbb R)$. Thus, on $A_1(\mathbb R)$, $S$ is the pseudo-differential operator with Kohn-Nirenberg symbol $\Psi$.
\end{theorem}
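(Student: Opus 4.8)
The plan is to evaluate the integral on the right directly, expand $\Psi$ through the propositions of Section~3, collapse it to the Gabor coefficient series, and then identify that series with $Sf(x)$. It is convenient to fix $x$ in the conull set on which $A_x:=\Sigma_n|g(x-na)|<\infty$ and the bound $\Sigma_m|\check\Phi_x(\xi-mb)|\le B_x$ from the definition of $\mathcal P_{a,b}$ holds for a.e.\ $\xi$, and on which the conclusions of Proposition~\ref{FSP} are available. Note that $g\in\mathcal P_{a,b}\subset L^1(\mathbb R)$ and $f\in A_1(\mathbb R)$ are both bounded (each equals the inverse transform of an $L^1$ function) and that $\widehat f\in L^1(\mathbb R)$. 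Since $|\Psi(x,\xi)|\le B_x$ for a.e.\ $\xi$, the integral converges absolutely for a.e.\ $x$ and $\Psi_x\widehat f\in L^1\cap L^2(\mathbb R)$; granted the first displayed equality of the theorem, the identity $Sf(x)=\mathcal F^{-1}M_{\Psi_x}\mathcal Ff(x)$ and the final sentence are then merely restatements of the notation $\mathcal F^{-1}(\Psi_x\widehat f)(x)$ and of the definition of $K_\Psi$.

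First I would pull the sum over $m$ out of the integral, writing
$$\int_{\mathbb R}\Psi(x,\xi)\widehat f(\xi)e^{2\pi\imath x\xi}\,d\xi=\Sigma_m\int_{\mathbb R}\check\Phi_x(\xi-mb)\widehat f(\xi)e^{2\pi\imath x\xi}\,d\xi;$$
this is legitimate by Tonelli because $\Sigma_m\int_{\mathbb R}|\check\Phi_x(\xi-mb)|\,|\widehat f(\xi)|\,d\xi\le B_x\|\widehat f\|_1<\infty$. This is precisely where membership in $\mathcal P_{a,b}$ is used, and it explains why that space is formulated in terms of $\check\Phi_x$ rather than of $g$ and $\check{\overline{g}}$ separately: the naive bound $\Sigma_m|\check{\overline{g}}(\xi-mb)|$ is only locally integrable, not bounded. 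For each fixed $m$ I would substitute $\xi=\eta+mb$, insert the expansion $\check\Phi_x(\eta)=\check{\overline{g}}(\eta)\,\Sigma_n g(x-na)e^{-2\pi\imath\eta(x-na)}$ from Proposition~\ref{FSP}, and interchange the sum over $n$ with the integral (legitimate since $A_x<\infty$ and $\check{\overline{g}}\,\widehat f(\cdot+mb)\in L^1$). The factors $e^{\pm2\pi\imath\eta x}$ cancel, and after one more application of Fubini and of Fourier inversion — valid because $f,\widehat f\in L^1$ — the $n$-th term reduces, via a change of variable, to $g(x-na)\langle f,E_{mb}T_{na}g\rangle$. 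Reassembling yields, for a.e.\ $x$,
$$\int_{\mathbb R}\Psi(x,\xi)\widehat f(\xi)e^{2\pi\imath x\xi}\,d\xi=\Sigma_m e^{2\pi\imath mbx}\Sigma_n g(x-na)\langle f,E_{mb}T_{na}g\rangle=\Sigma_m\Sigma_n\langle f,E_{mb}T_{na}g\rangle E_{mb}T_{na}g(x),$$
the iterated series converging absolutely in $n$ for each $m$ (because $A_x<\infty$ and $|\langle f,E_{mb}T_{na}g\rangle|\le\|f\|_\infty\|g\|_1$) and then in $m$.

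It remains to recognise this iterated series as $Sf(x)$. Since $(g,a,b)$ is Bessel, the double series defining $Sf$ converges unconditionally in $L^2(\mathbb R)$; grouping it by $m$ gives $Sf=\Sigma_m\sigma_m$ in $L^2$, where $\sigma_m:=\Sigma_n\langle f,E_{mb}T_{na}g\rangle E_{mb}T_{na}g$ converges in $L^2$ because the subfamily $\{E_{mb}T_{na}g:n\in\mathbb Z\}$ inherits a Bessel bound. For a.e.\ $x$ the $L^2$-representative of $\sigma_m$ coincides with its (absolutely convergent) pointwise partial-sum limit $e^{2\pi\imath mbx}\Sigma_n g(x-na)\langle f,E_{mb}T_{na}g\rangle$, so that $\Sigma_{|m|\le M}\sigma_m(x)$ tends to the iterated series above as $M\to\infty$; on the other hand a subsequence of $\Sigma_{|m|\le M}\sigma_m$ converges to $Sf$ a.e. Comparing the two limits on the common conull set forces the iterated series to equal $Sf(x)$ for a.e.\ $x$, which finishes the proof.

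I expect the structural heart of the argument to be the single step where the $m$-sum is pulled outside the integral: the estimate $\Sigma_m|\check\Phi_x(\xi-mb)|\le B_x$ is what makes the whole computation converge and is the entire reason for working in $\mathcal P_{a,b}$. The fussiest part, by contrast, will be the bookkeeping in the last paragraph — reconciling the $L^2$-convergent series that defines $S$ with the pointwise iterated series by means of subsequences, and verifying that the countably many exceptional null sets (one for each $n$, each $m$, and those inherited from Section~3) still union to a set of measure zero.
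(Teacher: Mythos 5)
Your proposal is correct and follows essentially the same route as the paper: pull the $m$-sum out of the integral using the $\mathcal P_{a,b}$ bound $\Sigma_m|\check\Phi_x(\xi-mb)|\le B_x$ together with $\widehat f\in L^1$, reduce each $m$-term via Fubini and Fourier inversion to $\Sigma_n\langle f,E_{mb}T_{na}g\rangle E_{mb}T_{na}g(x)$, and then identify the resulting double series with $Sf(x)$ almost everywhere. The only differences are cosmetic (you expand $\check\Phi_x$ in $n$ before inverting the Fourier transform, whereas the paper inverts first and then expands $\Phi_x$) and that your final paragraph spells out, via unconditional $L^2$-convergence and an a.e.-convergent subsequence, the identification of the pointwise iterated series with $Sf$ that the paper simply asserts as holding on a set $E_2$ of full measure.
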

\begin{proof}
For convenience, we write $e(x)$ for $e^{2\pi \imath x}$ in this proof.
Since $g \in \mathcal P_{a,b}$, we have $\Sigma_m \ |\check \Phi_{x}(\xi - mb)| \leq B_x < \infty $
for some $B_x>0$ and  a.e. $x,\xi \in \mathbb{R}$. Fix such an $x$ and consider the bounded measurable function $\Psi_{x}$
associated with the triplet $(g,a,b).$
For $f \in A_1(\mathbb{R})$ we have $\mathcal {F}f = \widehat{f}$ and

 $\int_{\mathbb{R}} \Sigma_m\ | \check \Phi_{x} (\xi-mb)|
|\widehat{f}(\xi)| d \xi \ \leq B_x \| \widehat{f}\|_1  < \infty$.
\\ Thus an application of dominated convergence theorem is valid and yields

$\mathcal {F}^{-1}M _{\Psi_{x}}\mathcal {F}f(x)
= \int_{\mathbb{R}}(\Sigma_m \check \Phi_{x}(\xi -mb))\widehat{f}(\xi)e(\xi x)\ d\xi$
 \\\hspace*{2cm}
$= \Sigma_m\ \int_ {\mathbb{R}} \check \Phi_{x} (\xi -mb) \widehat{f}(\xi)e(\xi x)\  d\xi$
 \\\hspace*{2cm} $= \Sigma_m\ \int_ {\mathbb{R}} \check \Phi_{x} (\xi -mb)
(\int_{\mathbb{R}} f(t)e(-\xi t) dt) e(\xi x)\ d\xi.$ (*)

Now $\int_ {\mathbb{R}}| \check \Phi_{x} (\xi -mb)|\ d\xi \int_{\mathbb{R}} |f(t)|\ dt $ $= \|\check \Phi_{x}\|_{1} \|f\|_{1} < \infty$,

\noindent so an application of Fubini's theorem below is justified and we compute:

$ \int _ {\mathbb{R}}\check \Phi_{x} (\xi -mb)\int_{\mathbb{R}} f(t) e(-\xi t)\ dt\
 e(\xi x) d \xi$
 \\\hspace*{2cm} 
= $\int_{\mathbb{R}} f(t) \int_ {\mathbb{R}} \check \Phi_{x}(\xi-mb) e(\xi(x-t))  d\xi \ dt$ 
\\\hspace*{2cm} 
= $\int_{\mathbb{R}} f(t) \int_ {\mathbb{R}} \check \Phi_{x}(u)e((u+mb)(x-t))\ du\ dt$
\\\hspace*{2cm} 
= $\int_{\mathbb{R}} f(t)\int_ {\mathbb{R}} \check \Phi_{x}(u) e(-u(t-x))\ du\ e(mb(x-t))\ dt$ 
\\\hspace*{2cm} 
= $\int_{\mathbb{R}} f(t) \ \mathcal F \mathcal F^{-1} \Phi_x (t-x)\ e(mb(x-t)) \ dt$
\\\hspace*{2cm} 
= $\int_{\mathbb{R}} f(t) \Phi_{x} (t-x) e(mb(x-t))\ dt$
 \\\hspace*{2cm} 
= $e(mbx)\int_{\mathbb{R}}f(t) \Phi_{x} (t-x) e(-mbt)\ dt$
 \\\hspace*{2cm} 
= $e(mbx)\int_{\mathbb{R}} f(t) \Sigma_n g(x-na)\overline{g} (t-na) e(-mbt)\ dt$
 \\\hspace*{2cm} 
= $e(mbx) \int_{\mathbb{R}} f(t) \Sigma_n \ T_{na}g(x)\overline{T_{na}g} (t) e(-mbt)\ dt$.

\noindent But $|f|, | T_{na}\overline{g}| \in L^{2}(\mathbb{R})$ so we have
$\int_{\mathbb{R}}|f(t)| | \overline{T_{na}g}(t)|\ dt < \infty$ by Schwarz inequality and consequently
$\Sigma_n \ |T_{na}g(x)| \int_{\mathbb{R}} |f(t)| | \overline{T_{na}g}(t)|\ dt < \infty$.
This validates an application of dominated
convergence theorem, and we get

$\int _ {\mathbb{R}}\check \Phi_{x} (\xi -mb)( \int_{\mathbb{R}} f(t) e(-\xi t) dt)
e(\xi x)\ d\xi$
\\\hspace*{2cm} 
= $e(mbx)\Sigma_n \ T_{na}g(x) \int_{\mathbb{R}}f(t) e(-mbt) \overline{T_{na}g}(t)\ dt$
\\\hspace*{2cm} 
= $e(mbx) \Sigma_n \ T_{na}g(x) \int_{\mathbb{R}}f(t) \overline{E_{mb}T_{na}g}(t)\ dt$
\\\vspace*{2mm}\hspace*{2cm} 
= $\Sigma_n e(mbx) \ T_{na}g(x) \langle f, E_{mb} T_{na}g \rangle$ 
\\\vspace*{2mm}\hspace*{2cm} 
= $\Sigma_n E_{mb}\ T_{na}g(x) \langle f, E_{mb} T_{na}g \rangle $
\\\vspace*{2mm}\hspace*{2cm} 
= $\Sigma_n \langle f, E_{mb} T_{na}g \rangle E_{mb}\ T_{na}g(x).$

\noindent Substituting the expression on the right in (*), we thus get
\\\hspace*{2cm} 
$\mathcal {F}^{-1}M _{\Psi_{x}}\mathcal {F}f(x)$
$= \Sigma_{m,n} \langle f, E_{mb} T_{na}g \rangle E_{mb}\ T_{na}g(x)$\\
for all $x$ in a set $E_1$ of full measure. 
\\But  $\Sigma_{m,n} \langle f, E_{mb} T_{na}g \rangle E_{mb}\ T_{na}g = Sf$
on a set $E_2$ of full measure. 

Thus $\mathcal {F}^{-1}M _{\Psi_{x}}\mathcal {F}f(x) = Sf(x)$ for all $x$
in the set $E_1\cap E_2$ of full measure, thereby completing the proof.
\end{proof}
Motivated by the symbol theorem above, we define the \textit{Kohn-Nirenberg operator}  $K_{\Psi}$, corresponding to the symbol function $\Psi$ for a $g\in \mathcal P_{a,b}$, by
 \\\hspace*{2cm} $ K_{\Psi} (f)(x) = \mathcal F^{-1}M_{\Psi_x} \mathcal{F}f(x)$
\\\noindent for $f\in A_1(\mathbb{R})$ and a.e. $x \in \mathbb{R}$.

An important problem for pseudo-differential operators is their $L^2$ boundedness. We  find situations when a Kohn-Nirenberg operator is a bounded linear operator on $L^2(\mathbb{R})$ and yields a Weyl-Heisenberg preframe operator.
\begin{theorem} \label{KN1}
 Suppose $g \in \mathcal P_{a,b}$ satisfies any one of the following conditions:

 i) $\underset{m,n}{\Sigma} \ | \langle f, E_{mb}T_{na}g\rangle | \leq \beta \|f\|_2$ for some $\beta >0$ and for all $f \in A_1(\mathbb{R})$;

ii) $\underset{n}{\Sigma} |g(x-na)| \leq A$ and $\underset{k}{\Sigma} |g(x-k/b)| \leq B$ for a.e. $x\in \mathbb{R}$ for some positive constants $A, B$ and $0< ab \leq 1$.

In each of these cases, both of the following assertions hold.

a) $K_{\Psi}$ is defined on a dense subspace $D$ of $L^2(\mathbb{R})$,
 $K_{\Psi}(f) \in L^2(\mathbb{R})$ for all $f \in D$ and $K_{\Psi}$ extends to a positive, bounded linear operator on $L^2(\mathbb{R})$.

b) $(g,a,b)$ is a Bessel sequence with preframe operator $S = K_{\Psi}$.
\end{theorem}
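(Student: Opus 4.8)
\emph{Proof strategy.} The plan is to reduce both cases to the single statement that $(g,a,b)$ is a Bessel sequence, and then let the symbol theorem (Theorem~1) together with elementary frame theory finish the job. Suppose $(g,a,b)$ has been shown to be Bessel. Then its preframe operator $S$ is bounded, and it is positive (hence self-adjoint), since $\langle Sf,f\rangle=\sum_{m,n}|\langle f,E_{mb}T_{na}g\rangle|^2\ge0$ for all $f\in L^2(\mathbb R)$. Because $g\in\mathcal P_{a,b}$, Theorem~1 applies and gives $K_\Psi f=\mathcal F^{-1}M_{\Psi_x}\mathcal F f=Sf$ for every $f$ in the dense subspace $D:=A_1(\mathbb R)$; in particular $K_\Psi(f)\in L^2(\mathbb R)$ on $D$, and the unique continuous extension of $K_\Psi$ to $L^2(\mathbb R)$ is $S$ itself, which is positive and bounded. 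That is assertion (a), and (b) is then automatic: $(g,a,b)$ is Bessel with preframe operator $S=K_\Psi$. So everything reduces to establishing the Bessel property from hypothesis (i) or hypothesis (ii).

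Case (i) is immediate: the bound $\sum_{m,n}|\langle f,E_{mb}T_{na}g\rangle|\le\beta\|f\|_2$ on $A_1(\mathbb R)$ gives $\sum_{m,n}|\langle f,E_{mb}T_{na}g\rangle|^2\le\beta^2\|f\|_2^2$ on $A_1(\mathbb R)$ via $\ell^1\subset\ell^2$, and this propagates to all of $L^2(\mathbb R)$ by a routine approximation (bound a finite partial sum using $f_j\in A_1(\mathbb R)$ with $f_j\to f$, let $j\to\infty$, then take the supremum over finite index sets). Hence $(g,a,b)$ is Bessel with bound $\beta^2$.

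Case (ii) carries the actual work. I would fix $f\in L^2(\mathbb R)$ and estimate $\sum_{m,n}|\langle f,E_{mb}T_{na}g\rangle|^2$ by a periodisation/Parseval argument carried out one $n$ at a time. For fixed $n$ the function $h_n(t):=f(t)\overline{g(t-na)}$ lies in $L^1(\mathbb R)$ (Schwarz inequality), and $\langle f,E_{mb}T_{na}g\rangle=\int_{\mathbb R}h_n(t)e^{-2\pi imbt}\,dt$ is, up to the usual constant, the $m$-th Fourier coefficient of the $1/b$-periodisation $F_n(t):=\sum_j h_n(t+j/b)$; Parseval on $L^2([0,1/b])$ then gives $\sum_m|\langle f,E_{mb}T_{na}g\rangle|^2=(1/b)\int_0^{1/b}|F_n(t)|^2\,dt$. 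The two hypotheses of (ii) now enter through two periodisations of $|g|$ at different scales: Schwarz inside $F_n$ together with $\sum_j|g(x-j/b)|\le B$ yields $|F_n(t)|^2\le B\sum_j|f(t+j/b)|^2\,|g(t+j/b-na)|$, and unfolding the periodisation and summing over $n$ with $\sum_n|g(x-na)|\le A$ gives $\sum_n\int_0^{1/b}|F_n|^2\le AB\|f\|_2^2$. Hence $\sum_{m,n}|\langle f,E_{mb}T_{na}g\rangle|^2\le(AB/b)\|f\|_2^2$, so $(g,a,b)$ is Bessel. (Equivalently, swapping the sums over $k$ and $n$ turns the same two bounds into $\sum_k|G_k(x)|\le AB$ for a.e.\ $x$, after which one may invoke the standard Walnut-type Bessel criterion.) All interchanges above involve only non-negative terms, so Tonelli justifies them; the sole equality used, the per-$n$ Parseval identity, holds in the extended sense even when $F_n\notin L^2([0,1/b])$. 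The hypothesis $0<ab\le1$ is the customary density condition for Gabor systems and is not otherwise needed for this estimate.

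The main obstacle is precisely this Bessel estimate in case (ii): getting the bookkeeping of the per-$n$ Parseval step right and deploying the two periodisation bounds at their respective scales $a$ and $1/b$. It is worth stressing why the detour through frame theory is unavoidable: hypothesis (ii) does yield the uniform symbol bound $\|\Psi_x\|_\infty\le AB/b$ for a.e.\ $x$, but for a pseudo-differential operator such a bound alone does not force $L^2$-boundedness (the output variable $x$ also occurs inside $\Psi(x,\xi)$); the boundedness and positivity of $K_\Psi$ come only from identifying $\Psi$ as the Kohn--Nirenberg symbol of a genuine preframe operator, which Theorem~1 supplies once the Bessel property is in hand.
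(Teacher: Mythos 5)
Your proof is correct, and while case (i) tracks the paper closely, your treatment of case (ii) and your overall organization differ genuinely from the paper's. The paper handles the two cases asymmetrically: in (i) it estimates $\|K_\Psi f\|_2$ directly by Cauchy--Schwarz on the double sum (getting the bound $\beta\|g\|_2\|f\|_2$), while in (ii) it works on the dense subspace of compactly supported bounded functions and outsources the heavy lifting to two external results --- Proposition 2.4 of Casazza--Christensen--Janssen for the unconditional $L^2$-convergence of the Walnut series $(1/b)\sum_k(T_{k/b}f)G_k$ together with the quadratic-form identity, and Theorem 9.1.5 of Christensen (the condition $\sum_k|G_k(x)|\le AB$ a.e.) for the Bessel property. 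You instead reduce both cases to a single pivot --- establish the Bessel bound, then invoke Theorem 1 to identify $K_\Psi$ with the preframe operator $S$ on $A_1(\mathbb R)$, whose boundedness and positivity are automatic --- and in case (ii) you prove the Bessel estimate $\sum_{m,n}|\langle f,E_{mb}T_{na}g\rangle|^2\le(AB/b)\|f\|_2^2$ from scratch by the per-$n$ periodisation/Parseval argument (which is essentially a self-contained proof of the criterion the paper cites). Your computation checks out: the normalization in the Parseval step, the weighted Cauchy--Schwarz using $\sum_j|g(x-j/b)|\le B$, and the unfolding against $\sum_n|g(x-na)|\le A$ are all correct, and your closing remarks (that $0<ab\le 1$ is not needed for the estimate, and that a sup bound on $\Psi_x$ alone would not give $L^2$-boundedness) are accurate. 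What your route buys is uniformity and independence from the cited literature; what the paper's route buys is the explicit identification $K_\Psi f=(1/b)\sum_k(T_{k/b}f)G_k$ linking the symbol to the Walnut representation, which it exploits later.
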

\begin{proof}
 We prove a) in each case and b) will follow easily from known results.
\\i) Take $D$  as the dense subspace $A_1(\mathbb{R})$ of $L^2(\mathbb{R})$. Since $g \in \mathcal P_{a,b}$, for all $f\in D$, as in the proof of the representation theorem, we have

\hspace{2cm}$\mathcal F^{-1}M_{\Psi_{x}}\mathcal Ff(x) = \Sigma_{m,n}\  \langle f, E_{mb}T_{na}g\rangle \ E_{mb}T_{na}g(x)$
\\for a.e. $x \in \mathbb{R}$. Using this and applying the Schwarz inequality we have
\\
\noindent $\int_{\mathbb{R}} |K_{\Psi}(f)(x)|^2 \ dx$
\\\hspace*{1cm}  $= \int_{\mathbb{R}} |\mathcal F^{-1}M_{\Psi_{x}}\mathcal{F}f(x)|^2 \ dx $
\\\hspace*{1cm}  $= \int_{\mathbb{R}} |\Sigma_{m,n} \  \langle f, E_{mb}T_{na}g\rangle \ E_{mb}T_{na}g(x)|^2 \ dx $
\\\hspace*{1cm}  $\leq \int_{\mathbb{R}} \Sigma_{m,n} \ | \langle f, E_{mb}T_{na}g\rangle| \ |E_{mb}T_{na}g(x)|
\Sigma_{k,l} \ | \langle f, E_{kb}T_{la}g\rangle| |E_{kb}T_{la}g(x)| \ dx $
\\\hspace*{1cm}  $=  \Sigma_{m,n} \ | \langle f, E_{mb}T_{na}g\rangle| \Sigma_{k,l} \ | \langle f, E_{kb}T_{la}g\rangle|
\int_{\mathbb{R}}  |E_{mb}T_{na}g(x)||E_{kb}T_{la}g(x)| \ dx $
\\\vspace*{2mm}\hspace*{1cm}  $\leq \Sigma_{m,n} \ | \langle f, E_{mb}T_{na}g\rangle| \ \Sigma_{k,l} \ |\langle f, E_{kb}T_{la}g\rangle| \|E_{mb}T_{na}g\|_2  \ \|E_{kb}T_{la}g\|_2$
\\\vspace*{2mm}\hspace*{1cm}  $= (\Sigma_{m,n} \ | \langle f, E_{mb}T_{na}g\rangle|)^2 \|g\|_2^2 $
\\\vspace*{2mm}\hspace*{1cm}  $\leq \beta^2 \|f\|_2^2 \|g\|_2^2$.

\noindent Thus $K_{\Psi}f \in L^2(\mathbb{R})$ and $K_{\Psi}$ is a bounded linear operator on $D$. By denseness of $D$, it extends to a bounded operator on $L^2(\mathbb{R})$.
Now  if $T$ is the preframe operator of the Bessel sequence $(g,a,b)$, we have, for $f\in D$,

$ \langle K_{\Psi}(f), f\rangle = \int_{\mathbb{R}} \mathcal F^{-1}M_{\Psi_x}\mathcal F f(x)\ \bar f(x)\ dx$
\\\hspace*{2.1cm} $=\int_{\mathbb{R}} \Sigma_{m,n} \  \langle f, E_{mb}T_{na}g\rangle \ E_{mb}T_{na}g(x)\ \bar f(x) \ dx$
\\\hspace*{2.1cm} $= \int_{\mathbb{R}} Tf(x) \bar{f}(x) dx$
  $= \langle Tf, f \rangle $
\\\vspace*{2mm}\hspace*{2.1cm} $= \Sigma_{m,n} \ | \langle f, E_{mb}T_{na}g\rangle|^2$
  $\geq 0.$
\\\noindent This shows that $K_{\Psi}$ is a positive operator.

ii) The assumed conditions on $g$ give the estimate
\\\vspace*{2mm}\hspace*{1cm} 
$\Sigma_k |G_k(x)| \leq \Sigma_{k,n}  |g(x-na) \bar g(x-na+k/b)|$
\\\vspace*{2mm}\hspace*{2.4cm} $= \Sigma_n |g(x-na)|  \Sigma_k |\bar g(x-na+k/b)|$
  $\leq AB,$

\noindent and this, in turn, yields the estimate
$\Sigma_k |G_k(x)|^2 \leq (\Sigma_k |G_k(x)|)^2 \leq (AB)^2.$
 In this case the dense subspace $D$ we consider is the space of compactly supported, bounded functions in $L^2(\mathbb{R})$. By Proposition 2.4 of \cite{CCJ} the series $(1/b) \Sigma_k (T_{k/b}f) G_k$ converges unconditionally in the norm of $L^2(\mathbb{R})$ and

\vspace*{2mm} $\langle (1/b) \Sigma_k (T_{k/b}f) G_k, f\rangle = \Sigma_{m,n} |\langle f, E_{mb}T_{na}g\rangle|^2, f\in D$.

\vspace*{2mm}  But
$(1/b) \Sigma_k (T_{k/b}f)(x) G_k(x) = \mathcal F^{-1}M_{\Psi_x}\mathcal F f(x)
= K_{\Psi}(f)(x)$ for a.e. $x$.

\noindent Hence $K_{\Psi}(f) \in L^2(\mathbb{R})$ for $f\in D$ and (see \cite{CCJ}, proof of Proposition 2.4)

\noindent $\|K_{\Psi}(f) \|_2 ^2 = \|(1/b) \Sigma_k (T_{k/b}f) G_k\|_2^2 $
  $\leq  \int  \ |f(x)|^2  \Sigma_k|G_k(x)|^2 \ dx$
  $\leq  (AB)^2 \|f\|_2^2$.

\noindent \vspace*{2mm}Thus $\|K_{\Psi}(f) \|_2 \leq AB \|f\|_2$ and $K_{\Psi}$is clearly linear and bounded on $D$ and so extends to the whole of $L^2(\mathbb{R})$. The operator is positive
since
\\\hspace*{2cm} 
$\langle K_{\Psi}(f), f\rangle = \Sigma_{m,n} \ | \langle f, E_{mb}T_{na}g\rangle|^2 \ \geq 0$
for $f\in D$.

\vspace*{2mm}To see b), note that  the upper frame inequality is a consequence of the assumption on $g$ in case i). In case ii), $\Sigma_k |G_k(x)|$ is bounded almost everywhere and so by a well known result of Casazza and Christensen (Theorem 9.1.5, \cite{C}), $(g,a,b)$ is a Bessel sequence. The last assertion is clear since, in both cases,

\vspace*{2mm}$\langle K_{\Psi}(f), f\rangle = \Sigma_{m,n} \ | \langle f, E_{mb}T_{na}g\rangle|^2  = \langle Sf, f\rangle$.
\end{proof}
 If either $g \in \mathcal E := \cap \{ \mathcal {E}_{a,b} : 0 < ab < 1 \}$ (note that $\mathcal  E$ itself is a large space containing the Schwartz space) or is a compactly supported bounded function in $\mathcal  P_{a,b}$ and $0<ab \leq 1$, then condition ii) of Theorem \ref{KN1} is satisfied so that $K_{\Psi}$ extends to a bounded, positive linear operator on $L^2(\mathbb{R})$ and becomes the preframe operator of the Bessel sequence $(g,a,b)$. In particular, the assertion holds if $g \in C_c^{\infty}(\mathbb{R})$. Thus for a large class of generators of Gabor frames in $L^2(\mathbb{R})$, the corresponding frame operator is the Kohn-Nirenberg operator $K_{\Psi}$, whose Kohn-Nirenberg symbol $\Psi$ is explicitly given in terms of  the Gabor atom $g$ and the frame parameters $a$ and $b$ by
$$\Psi(x, \xi) = \Sigma_m\ \Sigma_n\ g(x-na)\ \check{\overline{g}}(\xi - mb)  e^{-2\pi i(x-na)(\xi - mb)}.$$

In this expression of $\Psi$, the symmetry in time and frequency aspects, the equal importance given to both the frame parameters $a$ and $b$ and the independence of the adjoint lattice parameters $1/a$ and $1/b$ are significant.

Even if $g \in L^2(\mathbb{R})$ is not meeting the requirements of Theorem \ref{KN1}, as discussed in \cite{EP1}, it is possible to approximate the frame operator of a Weyl-Heisenberg frame $(g,a,b)$ in $ L^2(\mathbb{R})$ by preframe operators generated by window functions chosen from the class $C_c^{\infty}(\mathbb{R})$. Since $C_c^{\infty}(\mathbb{R})$ functions meet the requirements of Theorem \ref{KN1}, the corresponding preframe operators are all Kohn-Nirenberg operators. Hence in view of Lemma 5 in \cite{EP1}, we have the following: 
\begin{corollary}
For every  Weyl-Heisenberg frame operator $S$ on $L^2(\mb{R})$, there is a sequence $\{K_{\Psi_j}\}$ of Kohn-Nirenberg operators on $L^2(\mb{R})$ such that $\lim_j \langle K_{\Psi_j} f, f \rangle = \langle Sf, f \rangle$ for $f\in B_c(\mb R)$.
\end{corollary}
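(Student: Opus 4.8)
The plan is to derive this from the approximation result of \cite{EP1} together with Theorem \ref{KN1}, so that no new analysis is really required beyond bookkeeping. Let $S$ be the frame operator of a Weyl-Heisenberg frame $(g,a,b)$ on $L^2(\mathbb{R})$; since $(g,a,b)$ is a frame, the usual density necessary condition forces $0<ab\le 1$. First I would invoke Lemma 5 of \cite{EP1} (the content of the paragraph preceding the corollary): there is a sequence $g_j\in C_c^{\infty}(\mathbb{R})$ such that the preframe operators $S_j$ of the Bessel sequences $(g_j,a,b)$, formed with the \emph{same} parameters $a,b$, satisfy $\langle S_jf,f\rangle\to\langle Sf,f\rangle$ for every $f$ in $B_c(\mathbb{R})$, the space of bounded, compactly supported functions in $L^2(\mathbb{R})$. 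This is quoted verbatim and is the one external input.

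The second step is to recognise each $S_j$ as a Kohn-Nirenberg operator. I would check that every $g_j$ satisfies hypothesis ii) of Theorem \ref{KN1} with the parameters $a,b$. On the one hand, $g_j\in C_c^{\infty}(\mathbb{R})\subset\mathcal S\subset W\cap\widehat W\subset\mathcal E_{a,b}\subset\mathcal P_{a,b}$, the first two inclusions being the proposition on $W\cap\widehat W$ in Section 3 and the last being Proposition \ref{PE}; hence $g_j\in\mathcal P_{a,b}$. On the other hand, since $g_j$ is bounded with compact support, for a.e.\ $x$ only finitely many translates $g_j(x-na)$ are nonzero, their number bounded independently of $x$, so $\Sigma_n|g_j(x-na)|\le A_j$ and likewise $\Sigma_k|g_j(x-k/b)|\le B_j$ for suitable constants; together with $0<ab\le 1$ this is precisely condition ii). Therefore Theorem \ref{KN1} applies and gives that the preframe operator $S_j$ of $(g_j,a,b)$ coincides, on the dense subspace $B_c(\mathbb{R})$ and then by extension on all of $L^2(\mathbb{R})$, with the Kohn-Nirenberg operator $K_{\Psi_j}$, where $\Psi_j$ is the symbol function associated with $g_j$ as in Proposition \ref{FSP}.

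Finally I would simply combine the two steps: for $f\in B_c(\mathbb{R})$ we have $\langle K_{\Psi_j}f,f\rangle=\langle S_jf,f\rangle$ by the identification just made (note that $B_c(\mathbb{R})$ is exactly the domain $D$ used in case ii) of Theorem \ref{KN1}), and the right-hand side converges to $\langle Sf,f\rangle$ by \cite{EP1}; this is the claimed statement. I do not expect a genuine obstacle here: the only point that needs any care is verifying hypothesis ii) of Theorem \ref{KN1} for the $C_c^{\infty}$ windows and making sure the parameters $a,b$ are kept fixed throughout, and both are routine once one notes that compact support plus boundedness yields the two uniform summability bounds. All the substantive work has been front-loaded into \cite{EP1} and Theorem \ref{KN1}.
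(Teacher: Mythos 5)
Your proposal is correct and follows essentially the same route as the paper: invoke Lemma 5 of \cite{EP1} to approximate $S$ by preframe operators $S_j$ of Bessel sequences $(g_j,a,b)$ with $g_j\in C_c^{\infty}(\mathbb{R})$, then observe that each such $g_j$ satisfies condition ii) of Theorem \ref{KN1} (being bounded and compactly supported, with $0<ab\le 1$ forced by the density theorem), so that $S_j=K_{\Psi_j}$. The paper leaves exactly these verifications implicit in the paragraph preceding the corollary; your write-up merely makes them explicit.
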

Analogously, other results on approximations, subsequent to Lemma 5 in \cite{EP1} can also be restated in terms of Kohn-Nirenberg operators on $L^2(\mb{R})$. 

The next corollary, showing that the operator $K_{\Psi}$ is useful for characterising Weyl-Heisenberg  frames in $L^2(\mathbb{R})$, is immediate from Theorem \ref{KN1}.
\begin{corollary}\label{KN2}
 Let $g$ be as in the Theorem \ref{KN1}. Then $(g,a,b)$ is a Weyl-Heisenberg frame if and only if the Kohn-Nirenberg operator $K_{\Psi}$ is bounded below: there is a positive constant $\alpha$ such that $ \langle K_{\Psi}f, f\rangle \geq \alpha \|f\|_2^2,f\in L^2(\mathbb{R})$.
\end{corollary}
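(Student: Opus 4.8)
The plan is to deduce everything from Theorem \ref{KN1} together with the standard frame-theoretic fact that a Bessel sequence is a frame precisely when it additionally satisfies the lower frame inequality. First I would recall that, under the hypotheses on $g$, part b) of Theorem \ref{KN1} already guarantees that $(g,a,b)$ is a Bessel sequence; hence the upper frame bound is automatic, and $(g,a,b)$ is a Weyl-Heisenberg frame if and only if there is an $\alpha>0$ with $\Sigma_{m,n}\, |\langle f, E_{mb}T_{na}g\rangle|^2 \geq \alpha \|f\|_2^2$ for all $f\in L^2(\mathbb{R})$.

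Next I would identify the quadratic form of $K_{\Psi}$ with this frame sum. By Theorem \ref{KN1}, $K_{\Psi}$ extends to a bounded positive operator on $L^2(\mathbb{R})$ which equals the preframe operator $S$ of $(g,a,b)$, and on the dense subspace $D$ of that theorem one has $\langle K_{\Psi}f, f\rangle = \Sigma_{m,n}\, |\langle f, E_{mb}T_{na}g\rangle|^2 = \langle Sf, f\rangle$. The point to check is that this identity persists for every $f\in L^2(\mathbb{R})$: the left-hand side is continuous in $f$ because $K_{\Psi}$ is bounded, while the right-hand side equals $\|Cf\|_2^2$, where $C$ is the (bounded) analysis operator of the Bessel sequence, hence is also continuous; since the two continuous functionals agree on the dense subspace $D$, they agree on all of $L^2(\mathbb{R})$.

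With the identity $\langle K_{\Psi}f, f\rangle = \Sigma_{m,n}\, |\langle f, E_{mb}T_{na}g\rangle|^2$ in hand for all $f\in L^2(\mathbb{R})$, the equivalence is immediate: the lower frame inequality for $(g,a,b)$ is literally the statement that $K_{\Psi}$ is bounded below by $\alpha$, and conversely. Combined with the already established Bessel property, this yields that $(g,a,b)$ is a Weyl-Heisenberg frame if and only if $K_{\Psi} \geq \alpha I$ for some $\alpha>0$, which is the assertion.

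The only genuinely non-routine point is the density-and-continuity step — ensuring that the quadratic-form identity, initially available only on $D$, carries over to the whole space; everything else is a direct invocation of Theorem \ref{KN1} and the definition of a frame. This step is minor, since both sides are manifestly continuous, but it is worth spelling out rather than merely asserting.
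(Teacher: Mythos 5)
Your proposal is correct and follows exactly the route the paper intends: the paper gives no written proof, stating only that the corollary ``is immediate from Theorem \ref{KN1}'', and your argument --- Bessel property from part b), the quadratic-form identity $\langle K_{\Psi}f,f\rangle=\Sigma_{m,n}|\langle f,E_{mb}T_{na}g\rangle|^2$ extended from $D$ to all of $L^2(\mathbb{R})$ by boundedness and density, then the equivalence of the lower frame inequality with $K_{\Psi}\geq\alpha I$ --- is precisely the fleshed-out version of that remark. Your explicit treatment of the density-and-continuity step is a reasonable addition rather than a deviation.
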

As a simple application of our methods, we look at the Walnut representation of the Weyl-Heisenberg frame operator $S$, presented in \cite{C} as
\\\hspace*{2cm}
 $ Sf(x) = (1/b) \underset{k \in \mathbb{Z}}{\Sigma} (T_{k/b} f)(x) G_{k}(x)$ for all $f\in L^2(\mathbb{R})$,
\\\noindent where the series is absolutely convergent for almost all $x\in \mathbb{R}$.
 A thorough discussion on this can also be found in \cite{CCJ}.
\begin{proposition}
Let $g \in \mathcal P_{a,b}$ generates a frame $(g, a, b )$ with $S$ as its frame operator. Then the representation of $S$ as the pseudo-differential operator with symbol $\Psi$ yields the Walnut representation and conversely.
 \end{proposition}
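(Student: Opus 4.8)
The plan is to use the absolutely convergent Fourier-series expansion of $\Psi_x$ from Proposition~\ref{FSP} as the bridge between the two representations, together with the Fourier inversion formula on $A_1(\mathbb{R})$. Recall that for a.e.\ $x$ one has $\Psi_x(\xi)=\sum_k c_k(x)\,e^{2\pi i\xi(k/b)}$ with $c_k(x)=(1/b)G_k(x)$ and $\sum_k|c_k(x)|<\infty$, hence $\sum_k|G_k(x)|<\infty$ for a.e.\ $x$. I would work throughout on the dense subspace $A_1(\mathbb{R})$: there every $f$ is, after modification on a null set, bounded and continuous with $f(y)=\int_{\mathbb{R}}\widehat f(\xi)e^{2\pi iy\xi}\,d\xi$ for every $y$, and since $S$ is bounded and both the pseudo-differential operator with symbol $\Psi$ and the Walnut operator coincide with $S$ on $A_1(\mathbb{R})$, an identity established there extends to all of $L^2(\mathbb{R})$.

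For the forward implication I would fix an $x$ with $\sum_k|G_k(x)|<\infty$, substitute the Fourier series of $\Psi_x$ into $Sf(x)=\int_{\mathbb{R}}\Psi(x,\xi)\widehat f(\xi)e^{2\pi ix\xi}\,d\xi$, interchange $\sum_k$ with $\int d\xi$, and apply Fourier inversion term by term:
\[
Sf(x)=\sum_k c_k(x)\int_{\mathbb{R}}\widehat f(\xi)\,e^{2\pi i\xi(x+k/b)}\,d\xi=\sum_k c_k(x)\,f(x+k/b)=\frac1b\sum_k G_k(x)\,(T_{-k/b}f)(x),
\]
which is the Walnut representation of $S$ (after the trivial re-indexing $k\mapsto-k$ that matches the sign convention in the quoted form). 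For the converse I would reverse these steps: writing the Walnut identity in the equivalent form $Sf(x)=(1/b)\sum_k G_k(x)(T_{-k/b}f)(x)$, replace $(T_{-k/b}f)(x)=f(x+k/b)$ by $\int_{\mathbb{R}}\widehat f(\xi)e^{2\pi i\xi(x+k/b)}\,d\xi$, substitute, interchange sum and integral once more, and recognize the resulting $\xi$-series $(1/b)\sum_k G_k(x)e^{2\pi i\xi(k/b)}$ as $\Psi_x(\xi)$ by Proposition~\ref{FSP}, which gives back $Sf(x)=\int_{\mathbb{R}}\Psi(x,\xi)\widehat f(\xi)e^{2\pi ix\xi}\,d\xi$.

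The only nontrivial point is justifying the two interchanges of $\sum_k$ with $\int d\xi$. Each follows from Fubini's theorem (counting measure on $\mathbb{Z}$ against Lebesgue measure) via the bound $\big(\sum_k|c_k(x)|\big)\|\widehat f\|_1<\infty$, which combines the absolute convergence $\sum_k|c_k(x)|<\infty$ at a.e.\ $x$ from Proposition~\ref{FSP} with $\widehat f\in L^1(\mathbb{R})$ for $f\in A_1(\mathbb{R})$; the a.e.\ absolute convergence of the Walnut series itself follows in the same way because $f$ is bounded. The residual bookkeeping — selecting one exceptional null set that serves all $k\in\mathbb{Z}$ simultaneously, and reconciling the $G_k$ versus $G_{-k}$ convention in the quoted Walnut identity — is routine, and I would only remark on it.
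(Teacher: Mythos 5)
Your proof is correct and follows essentially the same route as the paper's: both arguments hinge on the absolutely convergent Fourier expansion $\Psi_x(\xi)=(1/b)\sum_k G_k(x)e^{2\pi i\xi k/b}$ from Proposition~\ref{FSP} together with the correspondence between multiplication by $e^{2\pi i\xi k/b}$ on the frequency side and translation by $k/b$ on the time side, run forwards for one direction and backwards for the other. You are, if anything, slightly more explicit than the paper about the Fubini justification and the $k\mapsto -k$ sign bookkeeping, which the paper glosses over.
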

\begin{proof}
It is easy to see that
 $\mathcal F^{-1} M_{\Psi_{x}} \mathcal F f
 = (1/b) \Sigma G_{k}(x) T_{(k/b)} f$, using the absolutely convergent Fourier expansion of
$\Psi$ obtained in Proposition \ref{FSP}.
 If $f\in A_1(\mathbb{R})$, then the right side is uniformly convergent and so each side is
a continuous function. Evaluating
at $x$, we get the Walnut representation.

Conversely, from the Walnut representation we can get

 $ Sf(x) \ \ =((1/b)\Sigma  G_{k}(x)\mathcal F^{-1} \mathcal F T_{k/b}) f(x)$
\\\hspace*{1.5cm} $ =\mathcal F^{-1} ((1/b)\Sigma  G_{k}(x) \mathcal F T_{k/b}) f(x)$
\\\hspace*{1.5cm} $= \mathcal F^{-1} ((1/b)\Sigma  G_{k}(x) E_{k/b} \mathcal F)f(x)$
 \\\hspace*{1.5cm} $= \mathcal F^{-1} \Psi_{x}\mathcal F(f)(x)$, again by Proposition \ref{FSP}.
\end{proof}

 The series $Sf = (1/b) \Sigma_k (T_{k/b} f) G_{k}$ in the Walnut representation converges in norm for all $f\in L^2(\mathbb{R})$ whenever $g \in W$ and the operator is bounded in norm (see 6.3.2 of \cite{G}) in this case. The first part of the proof above shows that the same conclusions hold if $g \in \mathcal {P}_{a,b}$ satisfies conditions of Theorem \ref{KN1}.
The following observation highlights the significance of the symbol $\Psi$ in characterising Weyl-Heisenberg frames.
\begin{theorem}\label{CWF}
If $g \in \mathcal E_{a,b}$ is such that $\Psi(x,\xi)$ is a function of $\xi$, say $ \psi_2 (\xi) = \Psi (x,\xi)$ a.e., then $(g,a,b)$ is a frame if and only if a condition of the form
$0<\alpha \leq  \Psi(x, \xi) \leq \beta$ holds a.e. $x,\xi \in \mathbb{R}$.
\end{theorem}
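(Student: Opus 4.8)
The plan is to reduce the statement to the elementary characterization of invertible positive Fourier multipliers: the operator $T_\psi := \mathcal{F}^{-1}M_\psi\mathcal{F}$ on $L^2(\mathbb{R})$ is bounded, positive and invertible exactly when $0<\alpha\le\psi\le\beta$ a.e. for some positive constants $\alpha,\beta$. The hypothesis ``$\Psi(x,\xi)=\psi_2(\xi)$ a.e.'' is precisely what collapses the pseudo-differential operator furnished by the symbol theorem into such a multiplier.

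First I would extract the Bessel property, since the symbol theorem carries it as a standing hypothesis. As $g\in\mathcal{E}_{a,b}\subset\mathcal{P}_{a,b}$ by Proposition \ref{PE}, Proposition \ref{FSP} gives, for a.e. $x$, the absolutely convergent Fourier expansion $\Psi_x(\xi)=(1/b)\Sigma_k G_k(x)e^{2\pi i\xi k/b}$ with $G_k(x)=\Sigma_n g(x-na)\overline{g}(x-na+k/b)$. If $\Psi_x=\psi_2$ for a.e. $x$, then by uniqueness of Fourier coefficients each $G_k(x)$ equals a constant $G_k$ for a.e. $x$, and absolute convergence makes $\Sigma_k|G_k(x)|=\Sigma_k|G_k|$ a finite constant, hence an essentially bounded function of $x$. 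By the Casazza--Christensen criterion (Theorem 9.1.5 of \cite{C}), $(g,a,b)$ is a Bessel sequence; let $S$ be its preframe operator. Also $\psi_2\in L^\infty(\mathbb{R})$ with $\|\psi_2\|_\infty\le(1/b)\Sigma_k|G_k|$, so $T_{\psi_2}$ is bounded on $L^2(\mathbb{R})$. Then the symbol theorem applies to $g$ and yields $Sf=\mathcal{F}^{-1}M_{\Psi_x}\mathcal{F}f=\mathcal{F}^{-1}M_{\psi_2}\mathcal{F}f=T_{\psi_2}f$ for all $f\in A_1(\mathbb{R})$; since $S$ and $T_{\psi_2}$ are bounded and $A_1(\mathbb{R})$ is dense, $S=T_{\psi_2}$ on all of $L^2(\mathbb{R})$. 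Using unitarity of $\mathcal{F}$ and the identity $\langle Sf,f\rangle=\Sigma_{m,n}|\langle f,E_{mb}T_{na}g\rangle|^2$ (valid for all $f\in L^2(\mathbb{R})$ since $S$ is a preframe operator), this gives
$$\Sigma_{m,n}|\langle f,E_{mb}T_{na}g\rangle|^2=\langle T_{\psi_2}f,f\rangle=\int_{\mathbb{R}}\psi_2(\xi)\,|\widehat{f}(\xi)|^2\,d\xi,\qquad f\in L^2(\mathbb{R}).$$

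Both implications then drop out of this identity. If $0<\alpha\le\psi_2\le\beta$ a.e., the identity immediately yields $\alpha\|f\|_2^2\le\Sigma_{m,n}|\langle f,E_{mb}T_{na}g\rangle|^2\le\beta\|f\|_2^2$ for all $f$, so $(g,a,b)$ is a frame. Conversely, if $(g,a,b)$ is a frame then $S=T_{\psi_2}$ is bounded, positive and invertible; boundedness gives $\psi_2\le\|S\|$ a.e., positivity forces $\psi_2\ge0$ a.e. (test the identity against $f$ with $\widehat f$ the indicator of a finite-measure set on which $\psi_2<0$), and invertibility of the positive operator forces $\int_{\mathbb{R}}(\psi_2(\xi)-\alpha)|\widehat f(\xi)|^2\,d\xi\ge0$ for all $f$, where $\alpha>0$ is the lower frame bound, hence $\psi_2\ge\alpha$ a.e. Since $\Psi(x,\xi)=\psi_2(\xi)$, this is exactly the asserted two-sided bound a.e. in $x,\xi$.

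The one genuinely non-routine step is the first one: one cannot simply invoke the symbol theorem in the ``if'' direction, because its hypothesis is the Bessel property, so that property must be wrung out of the structural assumption on $\Psi$ — the Fourier-coefficient argument above, forcing the $G_k$ to be constant and hence $\Sigma_k|G_k(x)|$ essentially bounded, is what makes this work. Everything afterwards is the textbook description of positive invertible multiplication operators, carried out by testing against indicator functions.
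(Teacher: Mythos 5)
Your proof is correct and follows essentially the same route as the paper's: both reduce the operator to the Fourier multiplier $\mathcal F^{-1}M_{\psi_2}\mathcal F$ and read the frame inequalities off the quadratic-form identity $\Sigma_{m,n}|\langle f,E_{mb}T_{na}g\rangle|^2=\int\psi_2(\xi)|\widehat f(\xi)|^2\,d\xi$. The one place you go beyond the paper is the opening step: the paper silently invokes the symbol theorem (whose hypothesis is the Bessel property) without explaining why $(g,a,b)$ is Bessel in the ``if'' direction, whereas you extract Bessel-ness from the structural hypothesis itself --- uniqueness of Fourier coefficients forces each $G_k(x)$ to be a constant $G_k$, the absolute convergence from Proposition \ref{FSP} makes $\Sigma_k|G_k(x)|=\Sigma_k|G_k|$ essentially bounded, and Casazza--Christensen does the rest. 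That is a genuine improvement, not just added pedantry. The only content of the paper's proof you omit is the closing computation of the explicit series $\psi_2(\xi)=(1/a)\Sigma_m|\check{\overline g}(\xi-mb)|^2$, which is not required by the statement of the theorem as given.
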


\begin{proof}
Under the hypothesis,  $ \langle \mathcal {F}^{-1} M _{\Psi_{x}}\mathcal {F}f, f \rangle  = \langle \psi_{2}\widehat{f}, \widehat{f} \rangle, f \in L^{2}(\mathbb{R})$.
If $(g,a,b)$ is a frame with frame operator $S$, then
$\alpha \|f\|^{2} \leq \langle Sf, f \rangle \leq \beta \| f \|^{2}$, say,
for all $f \in L^{2}(\mathbb{R})$.
But $Sf(x) = \mathcal {F}^{-1}M_{\psi_{2}}\mathcal {F}f(x)$, so $S = \mathcal {F}^{-1}M_{\psi_{2}}\mathcal {F}$. Thus  $\alpha I \leq M_{\psi_2} \leq \beta I$
and so $\psi_2$ satisfies the asserted inequalities.

Conversely, suppose $0<\alpha \leq  \Psi(x,\xi) \leq \beta$ for a.e. $x,\xi \in \mathbb{R}$.
Since $\|f\|^{2} = \|\widehat{f}\|^{2}$, we have
$\alpha \|f\|^{2} \leq \langle \psi_{2}\widehat{f}, \widehat{f} \rangle \leq \beta \| f \|^{2}$  for all
$f \in L^{2}(\mathbb{R})$ and

\vspace*{2mm}$\mathcal {F}^{-1} M_{\psi_{2}}\mathcal {F}f(x) =
\Sigma_{m,n} \ \langle f, E_{mb}T_{na}g\rangle E_{mb}T_{na}g(x)$,

\vspace*{2mm}$\langle \psi_{2}\widehat{f}, \widehat{f} \rangle = \langle \mathcal {F}^{-1} M _{\psi_{2}}\mathcal {F}f, f \rangle
=  \Sigma_{m,n} \ | \langle f, E_{mb}T_{na}g\rangle |^{2}$.

\noindent These inequalities, in tandem, prove that $(g,a,b)$ is a frame.

It remains to get the series for $\psi_2$. By arguments analogous to those used in the proof of Proposition \ref{FSP},
$\Psi^{\xi}(x) = \Psi(x,\xi)$ has an absolutely convergent Fourier series
$(1/a) \Sigma_k \gamma_{k} e^{2 \pi i(k/a) x}$
where the coefficients are given by

$\gamma_k = \gamma_{k} (\xi) = \Sigma_m \
\check{\overline{g}}(\xi-mb) \check{\overline{g}}(\xi-mb+k/a)$.

\noindent If $\Psi$ is independent of $x$, this Fourier series reduces to the constant

$\gamma_0 = \psi_2(\xi) = (1/a)  \Sigma_m |\check{\overline{g}}(\xi - mb)|^{2}$.
\end{proof}
A similar observation can be made when $\Psi(x,\xi)$ is a function of $x$ alone and consequently, a version of the famous \textit{painless nonorthogonal expansion} (\cite{DGM}) can be given when the Gabor atom lies in $\mathcal {P} :=\cap \{\mathcal {P}_{a,b}: ab < 1\}$. Another version of this is also possible using Theorem \ref{CWF}, when the Gabor atom is in $\mathcal {E}$ and the support condition is imposed on the \textit{Fourier domain}.
\begin{corollary}
Suppose $g \in \mathcal E$ is such that the support of $\widehat{g}$ lies in the interval $ [0, L]$. Then for any $a,b$ with $ b \leq L <1/a$, the associated $\Psi$ is independent of the first variable $x$, $\Psi(x,\xi) = \psi_2(\xi)$  for almost all $x$ and for all $\xi$ for which the series for $\psi_2$ converges and the conclusions of the last theorem  hold.
\end{corollary}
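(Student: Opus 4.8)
The plan is to reduce the statement to Theorem~\ref{CWF} by showing that the support condition on $\widehat g$ collapses the $x$--dependence of $\Psi$. First I would note that $b\leq L<1/a$ forces $b<1/a$, i.e.\ $ab<1$; hence, since $g\in\mathcal E:=\cap\{\mathcal E_{a,b}:0<ab<1\}$, we get $g\in\mathcal E_{a,b}$ and so, by Proposition~\ref{PE}, $g\in\mathcal P_{a,b}$, which makes the symbol $\Psi$ and all its expansions available. In particular, by the computation carried out in the proof of Theorem~\ref{CWF}, for a.e.\ $\xi$ the function $\Psi^{\xi}(x):=\Psi(x,\xi)$ has an absolutely convergent Fourier series $(1/a)\,\Sigma_k\,\gamma_k(\xi)\,e^{2\pi i(k/a)x}$, where $\gamma_k(\xi)=\Sigma_m\,\check{\overline g}(\xi-mb)\,\check{\overline g}(\xi-mb+k/a)$; for $k=0$ this is $\gamma_0(\xi)=\Sigma_m|\check{\overline g}(\xi-mb)|^2$.

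The heart of the argument is that $\gamma_k(\xi)=0$ for every $k\neq 0$. Since $\check{\overline g}(\zeta)=\overline{\widehat g(\zeta)}$ vanishes precisely where $\widehat g$ does, $\check{\overline g}$ is supported in $[0,L]$; hence in any nonzero term of $\gamma_k(\xi)$ both $\xi-mb$ and $\xi-mb+k/a$ must lie in $[0,L]$, forcing $|k|/a=|(\xi-mb+k/a)-(\xi-mb)|\leq L$ and contradicting $|k|/a\geq 1/a>L$. Therefore every term vanishes, $\gamma_k(\xi)=0$ for all $k\neq 0$, and the Fourier series of $\Psi^{\xi}$ collapses to its constant term: $\Psi(x,\xi)=(1/a)\gamma_0(\xi)=(1/a)\Sigma_m|\check{\overline g}(\xi-mb)|^2=:\psi_2(\xi)$ for a.e.\ $x$ and for every $\xi$ at which the series converges; the latter holds for a.e.\ $\xi$, since $g\in\mathcal E_{a,b}$ gives $\Sigma_m|\widehat g(\xi-mb)|\leq B$ a.e. Thus $\Psi$ is a function of $\xi$ alone.

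With this in hand the rest is immediate: $g\in\mathcal E_{a,b}$ and $\Psi(x,\xi)=\psi_2(\xi)$, so Theorem~\ref{CWF} applies verbatim and yields that $(g,a,b)$ is a frame if and only if $0<\alpha\leq\psi_2(\xi)\leq\beta$ a.e.---exactly the ``conclusions of the last theorem.'' I would close with a remark on the two inequalities in the hypothesis: $L<1/a$ is precisely the ``painless'' condition that kills the off--diagonal coefficients $\gamma_k$, $k\neq0$, whereas $b\leq L$ both secures $ab<1$ (so that $g\in\mathcal E_{a,b}$ and Theorem~\ref{CWF} is available) and is the natural covering requirement $\bigcup_m([0,L]-mb)=\mathbb R$, without which $\inf_\xi\Sigma_m|\widehat g(\xi-mb)|^2>0$, and hence the frame property, could never hold. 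I do not anticipate a genuine obstacle here: the only points that demand care are verifying that all the hypotheses of Theorem~\ref{CWF} are genuinely met under $b\leq L<1/a$ and doing the support bookkeeping with the correct inequalities; everything else is a direct citation of results already established.
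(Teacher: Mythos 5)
Your proof is correct and follows essentially the same route as the paper: expand $\Psi^{\xi}$ in its Fourier series in $x$ (as in Theorem \ref{CWF}), observe that the support of $\check{\overline{g}}=\overline{\widehat{g}}$ in $[0,L]$ together with $1/a>L$ kills every coefficient $\gamma_k$ with $k\neq 0$, and conclude $\Psi(x,\xi)=\gamma_0(\xi)/a=\psi_2(\xi)$. You simply spell out the support bookkeeping and the verification that $ab<1$ (hence $g\in\mathcal E_{a,b}$), which the paper compresses into the phrase ``as before.''
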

\begin{proof} Observe that
$\Psi$ is bounded with respect to both $x$ and $\xi$ since

$| \Psi_(x,\xi)|
 \leq  \underset{n \in \mathbb{Z}}{\Sigma} \ | g(x-na)| \ \underset{m \in \mathbb{Z}}{\Sigma}\ | \check{\overline{g}}(\xi - mb)|$
 $\leq  AB.$
\\\noindent Since $\Psi$ is $a$-periodic in $x$, we can consider its Fourier series      as above. The assumed properties on the support of $\widehat{g}$ imply that $\gamma_{k}(\xi) = 0$ for
$k \neq 0$ as before and
$ \Psi(x,\xi) = \gamma_{0}(\xi) = (1/a)  \Sigma_m |
\check{\overline{g}}(\xi - mb)|^{2}$.
The last part is clear.
\end{proof}

\bibliographystyle{amsplain}

\end{document}